\documentclass{amsart}
\usepackage{amssymb,amscd,graphics,axodraw}
\usepackage[enableskew]{youngtab}
\usepackage[usenames]{color}
\usepackage[all]{xy}

\numberwithin{equation}{section}

\newtheorem{theorem}{Theorem}
\newtheorem{proposition}[theorem]{Proposition}
\newtheorem{lemma}[theorem]{Lemma}

\theoremstyle{definition}

\newtheorem{remark}{Remark}
\newtheorem{example}[remark]{Example}

\numberwithin{theorem}{section}
\numberwithin{definition}{section}
\numberwithin{remark}{section}

\newcommand{\calB}{\mathcal{B}}
\newcommand{\id}{\mathrm{id}}
\newcommand{\la}{\lambda}
\newcommand{\lb}{\mathrm{lb}}
\newcommand{\lh}{\mathrm{lh}}
\newcommand{\ls}{\mathrm{ls}}

\newcommand{\ot}{\otimes}

\newcommand{\Path}{\mathcal{P}}

\newcommand{\Q}{\mathbb{Q}}

\newcommand{\R}{\mathbb{R}}
\newcommand{\RC}{\mathcal{RC}}
\newcommand{\tr}{\mathrm{tr}}
\newcommand{\veps}{\varepsilon}
\newcommand{\vphi}{\varphi}
\newcommand{\Z}{\mathbb{Z}}

\newcommand{\diagram}[8]{\xymatrix{
	#1 \ar[r]^{#5} \ar[d]_{#6} & #2 \ar[d]^{#7} \\
	#3 \ar[r]_{#8} & #4}}

\begin{document}

\title[Similarity and KSS bijection]{Similarity and Kirillov-Schilling-Shimozono bijection}

\author[M.~Okado]{Masato Okado}
\address{Department of Mathematics, Osaka City University, 
3-3-138, Sugimoto, Sumiyoshi-ku, Osaka, 558-8585, Japan}
\email{okado@sci.osaka-cu.ac.jp}


\begin{abstract}
The behavior of the Kirillov-Schilling-Shimozono bijection is examined under 
the similarity map on Kirillov-Reshetikhin crystals. It enables us to define this 
bijection over $\Q$. Conjectures on the extension to $\R$ is also presented.
\end{abstract}

\maketitle


\section{Introduction}

The Kerov-Kirillov-Reshetikhin bijection \cite{KKR,KR}, or Kirillov-Schilling-Shimozono
bijection in more general setting \cite{KSS}, describes a one-to-one correspondence
between highest weight elements of a multiple tensor product of Kirillov-Reshetikhin
crystals of type A \cite{KMN2:1992,Sh:2002} and certain combinatorial objects called 
rigged configurations. Let $B$ be a tensor product of 
Kirillov-Reshetikhin crystals and $\Path(B)$ be the set of highest weight elements
of $B$. Then there corresponds a datum $L(B)$, the set $\RC(L(B))$ of rigged
configurations associated to $L(B)$, and
there exists a bijection
\[
\Phi:\Path(B)\longrightarrow\RC(L(B)).
\]

The Kirillov-Schilling-Shimozono bijection, KSS bijection for short, has various 
applications. There exist notions of weight and energy or charge statistic on both
sets, and $\Phi$ preserves them. Hence, taking generating functions with a fixed
weight give rise to an identity known as $X=M$ \cite{HKOTY,HKOTT}. Using the
Kyoto path model \cite{KMN1:1992} $M$ in a suitable limit gives an explicit form
of a branching function of the highest weight $\widehat{\mathfrak{sl}}_n$-module
with respect to the underlying simple Lie algebra $\mathfrak{sl}_n$. Another
significant application is found in the box-ball system \cite{TS}, where the bijection
$\Phi$ linearize this nonlinear ultra-discrete dynamical system \cite{KOSTY}.

In \cite{Oka} we reported that Kirillov-Reshetikhin (KR) crystals have the similarity
property. Let $B^{r,s}$ stand for a KR crystal where $r$ is an index of the Dynkin
diagram and $s$ a positive integer. The similarity map $S_m$ 
($m\in\Z_{>0}$) is an injective map $S_m:B^{r,s}\rightarrow B^{r,ms}$ satisfying
some properties on crystal operators. For type A, representing elements of $B^{r,s}$ by 
semistandard tableaux of $r\times s$ rectangular shape, the image of $S_m$ is
obtained by enlarging it horizontally $m$ times. 
Then it is a natural question to ask how
the composition map $\Phi\circ S_m$ is described. The answer is quite simple
and natural. A rigged configuration $(\nu,J)$ is composed of a sequence of partitions
$\nu$ and a set of nonnegative integers $J$. The similarity map $S_m$ on $\RC(L(B))$
amounts to multiplying by $m$ each part of all partitions in $\nu$ and each integer in 
$J$ (Theorem \ref{th:main}).

The above theorem enables us to consider the KSS bijection over $\Q$. Namely,
we represent an element of $B^{r,s}$ and of $\Path(B)$ as an integer point of a 
certain polytope in the Euclidian space. We then consider any rational points of the
polytope, apply $\Phi\circ S_m$ ($m$ is chosen so that the image of $S_m$ is an 
integer point), and shrink by $m$ on the rigged configuration side. We can show 
the map so constructed is well-defined and extends the bijection from integer points
to rational ones (Proposition \ref{prop:Q}). Furthermore, one should be able to consider 
$\Phi$ over $\R$. However, the proof of well-definedness seems nontrivial and 
we leave it to future problem. We end with conjectures on this extension to $\R$ 
and a connection to the tropical periodic Toda lattice by \cite{T}.

\section{Reviews on KR crystals, paths and rigged configurations}

\subsection{KR crystal} \label{subsec:KRcrystal}

A Kirillov-Reshetikhin crystal, KR crystal for short, is a crystal basis of certain 
finite-dimensional quantum affine algebra 
module called Kirillov-Reshetikhin module. If the corresponding affine algebra is of
nonexceptional type, its existence was shown in \cite{OS:2008}. A KR crystal is 
denoted by $B^{r,s}$, since it is parametrized by $(r,s)\in I\setminus\{0\}\times\Z_{>0}$
where $I$ is the index set of the Dynkin diagram of the affine algebra and $0$ is the
index as prescribed in \cite{Kac}. In this paper we denote Kashiwara operators by
$e_i,f_i$.

In \cite{Oka} we showed the following property of KR crystals of nonexceptional type.

\begin{theorem} \label{th:similarity}
For $m\in\Z_{>0}$ there exists a unique injective map 
\[
S_m:B^{r,s}\longrightarrow B^{r,ms}
\]
satisfying 
\[
S_m(e_ib)=e_i^mS_m(b), S_m(f_ib)=f_i^mS_m(b)
\]
for $i\in I$ and $b\in B^{r,s}$. Here $S_m(0)$ is understood to be $0$.
\end{theorem}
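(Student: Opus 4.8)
The plan is to deduce uniqueness from the connectedness of KR crystals, to reduce existence to the case $s=1$ by the tensor compatibility of the similarity map, to treat the simply-laced nonexceptional types $A_n^{(1)}$ and $D_n^{(1)}$ by an explicit stretching of tableaux, and to obtain the remaining types from these by virtualization. For uniqueness, recall that $B^{r,s}$ is connected as a crystal for the full affine datum $I$; hence if $S_m,S_m'$ both satisfy the asserted relations, the set $\{b\in B^{r,s}:S_m(b)=S_m'(b)\}$ is stable under every $e_i$ and $f_i$, so it is empty or all of $B^{r,s}$, and it suffices to match the two maps on one element. Writing $u_{r,t}$ for the classical highest weight vector of $B^{r,t}$, of weight $t\omega_r$, the relation $S_m(e_iu_{r,s})=e_i^mS_m(u_{r,s})=0$ for $i\ne0$ shows $S_m(u_{r,s})$ is a classical highest weight vector of $B^{r,ms}$; moreover the relations force $\mathrm{wt}\,S_m=m\,\mathrm{wt}+c$ for a global constant $c$, and evaluating at the classical highest- and lowest-weight vectors of $B^{r,s}$ (using that the classical weights of $B^{r,t}$ lie between $t\omega_r$ and $w_0(t\omega_r)$) forces $c=0$. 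Thus $S_m(u_{r,s})$ has classical weight $ms\omega_r$, so $S_m(u_{r,s})=u_{r,ms}$ and $S_m=S_m'$.

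For existence, note that $B^{r,s}$ embeds in $(B^{r,1})^{\ot s}$ as the connected component of $u_{r,1}^{\ot s}$, and $B^{r,ms}$ likewise in $(B^{r,1})^{\ot ms}$; since, after every $\veps_i$ and $\vphi_i$ has been multiplied by $m$, a string $e_i^m$ (or $f_i^m$) acting on a two-fold tensor product stays within a single tensor factor, the similarity map is compatible with the tensor product rule in the sense that $S_m(b_1\ot b_2)=S_m(b_1)\ot S_m(b_2)$. It therefore suffices to construct $S_m:B^{r,1}\rightarrow B^{r,m}$ with the required properties and tensor it $s$ times. In type $A_n^{(1)}$, realize $B^{r,1}$ by a single column of height $r$ and $B^{r,m}$ by a rectangular $r\times m$ tableau, and let $S_m$ replace the column by $m$ consecutive copies of it. For $i\in\{1,\dots,n\}$ the identity $S_m(e_ib)=e_i^mS_m(b)$ holds because this stretch multiplies each $\veps_i,\vphi_i$ by $m$ and converts one application of $e_i$ into $m$ of them (this is Kashiwara's similarity for $\mathfrak{sl}_{n+1}$-crystals), and similarly for $f_i$. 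For $i=0$ one uses $e_0=\mathrm{pr}^{-1}\circ e_1\circ\mathrm{pr}$ and $f_0=\mathrm{pr}^{-1}\circ f_1\circ\mathrm{pr}$, where $\mathrm{pr}$ is the promotion operator, together with the identity $\mathrm{pr}\circ S_m=S_m\circ\mathrm{pr}$ (checked from the jeu-de-taquin description of promotion on rectangular tableaux); then $e_0^mS_m(b)=\mathrm{pr}^{-1}e_1^m\,\mathrm{pr}\,S_m(b)=\mathrm{pr}^{-1}e_1^mS_m(\mathrm{pr}\,b)=\mathrm{pr}^{-1}S_m(e_1\,\mathrm{pr}\,b)=S_m(e_0b)$, and symmetrically for $f_0$. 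Injectivity is clear.

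Type $D_n^{(1)}$ is handled in the same fashion, using the Kashiwara--Nakashima tableau model and realizing the $0$-arrows through the order-two Dynkin-diagram automorphism, which again commutes with the (local) stretching operation. For the non-simply-laced nonexceptional types $C_n^{(1)},B_n^{(1)},A_{2n}^{(2)},A_{2n-1}^{(2)},D_{n+1}^{(2)}$, embed $B^{r,1}$ into a tensor product $\widehat B$ of KR crystals of a simply-laced type obtained by unfolding, so that the crystal operators $\widehat e_i$ are, up to fixed powers, products of the $e_j$ over the orbit of $i$ under the folding automorphism. The similarity map on $\widehat B$ (already constructed, and tensor-compatible as above) commutes with these products and with the folding automorphism, hence stabilizes the virtual subcrystal and restricts to the desired $S_m:B^{r,1}\rightarrow B^{r,m}$; tensoring then gives general $s$.

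The step I expect to be the main obstacle is the compatibility with the affine ($0$-)arrows: proving $\mathrm{pr}\circ S_m=S_m\circ\mathrm{pr}$ in type $A_n^{(1)}$, its analogue in type $D_n^{(1)}$, and, in the folded cases, checking that the simply-laced similarity map stabilizes the \emph{virtual} subcrystal and not merely the ambient crystal --- this last point also requiring that the virtualization multiplicities interact correctly with raising to the $m$-th power. Everything else reduces to standard crystal combinatorics.
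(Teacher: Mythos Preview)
The paper does not actually prove this theorem; it merely quotes the result from \cite{Oka} (``In \cite{Oka} we showed the following property\ldots''). So there is no in-paper proof to compare against, and your sketch should be measured against the argument in that reference. Your overall architecture---uniqueness from connectedness of the affine crystal, reduction to $s=1$ via tensor compatibility of $S_m$, an explicit stretching in the simply-laced tableau models, and virtualization for the folded types---is a sound and standard plan, and is close in spirit to how \cite{Oka} proceeds (that paper exploits the \emph{simplicity} of KR crystals, i.e.\ their characterization as the unique crystal with a prescribed classical decomposition plus certain extremal properties, which packages some of your case analysis more uniformly).

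Two places deserve tightening. First, in the reduction to $s=1$ you embed $B^{r,s}\hookrightarrow (B^{r,1})^{\otimes s}$ and $B^{r,ms}\hookrightarrow (B^{r,1})^{\otimes ms}$, but the map you actually build is $S_m^{\otimes s}:(B^{r,1})^{\otimes s}\to (B^{r,m})^{\otimes s}$; you then need the separate fact that $B^{r,ms}$ sits inside $(B^{r,m})^{\otimes s}$ as the component of $u_{r,m}^{\otimes s}$, and that $S_m^{\otimes s}$ carries $u_{r,1}^{\otimes s}$ to $u_{r,m}^{\otimes s}$, so the image lands in the right component. This is easy but should be said explicitly. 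Second, your uniqueness argument pins down the classical part of the constant $c$ in $\mathrm{wt}\,S_m(b)=m\,\mathrm{wt}(b)+c$, but for affine KR crystals the weights live in the level-zero lattice; you should remark why there is no residual $\delta$-ambiguity (in practice KR crystal weights are taken modulo $\delta$, or one invokes that the crystal is finite so $\delta$ plays no role). Beyond these, the items you flag as the main obstacles---$\mathrm{pr}\circ S_m=S_m\circ\mathrm{pr}$, the type $D$ analogue, and stability of the virtual subcrystal under the ambient $S_m$---are indeed where the real work lies, and each requires a genuine (if routine) verification rather than a one-line remark.
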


In what follows we consider the case of $A_n^{(1)}$. Set $I_0=I\setminus\{0\}$.
Let $\alpha_i\,(i\in I)$ be simple roots
and $\varpi_i\,(i\in I_0)$ level 0 fundamental weights. We describe the KR crystal 
$B^{r,s}$ of $A_n^{(1)}$. As a set $B^{r,s}$ is given by semistandard
tableaux of shape $(s^r)$ with letters from $\{1,2,\ldots,n+1\}$. The action of 
Kashiwara operators $e_i,f_i$ for $i\in I_0$ is described in \cite{KN:1994} by reading letters
of the tableau in a certain manner. For $e_0,f_0$ it is defined through the so called 
promotion operator $\mathrm{pr}$ \cite{Sh:2002} by
\[
e_0=\mathrm{pr}^{-1}\circ e_1\circ \mathrm{pr},\quad
f_0=\mathrm{pr}^{-1}\circ f_1\circ \mathrm{pr}.
\]
See also \cite[\S2.2]{O:Memoirs} on these descriptions.
For an element $b$ of $B^{r,s}$ $S_m(b)$ is described as follows.
For each row a node with letter $a$ is replaced with $m$ nodes with the same 
letter $a$.

\subsection{Path} \label{subsec:path}

Let $B_1,B_2$ be crystals. The tensor product of crystals $B_2\ot B_1$ is defined
with its crystal structure given by
\begin{align}
e_i(b_2\ot b_1)&=\left\{
\begin{array}{ll}
b_2\ot e_ib_1\quad & \text{if }\veps_i(b_2)\le\vphi_i(b_1) \\
e_ib_2\ot b_1\quad & \text{if }\veps_i(b_2)>\vphi_i(b_1),
\end{array}\right. \label{e tensor}\\
f_i(b_2\ot b_1)&=\left\{
\begin{array}{ll}
b_2\ot f_ib_1\quad & \text{if }\veps_i(b_2)<\vphi_i(b_1) \\
f_ib_2\ot b_1\quad & \text{if }\veps_i(b_2)\ge\vphi_i(b_1).
\end{array}\right. \label{f tensor}
\end{align}
Here $0\ot b$ and $b\ot 0$ are understood by $0$ and
\[
\veps_i(b)=\max\{k\ge0\mid e^mb\ne0\},\quad
\vphi_i(b)=\max\{k\ge0\mid f^mb\ne0\}.
\]
From \eqref{e tensor}, \eqref{f tensor} we have
\begin{align}
\veps_i(b_2\ot b_1)&=\max(\veps_i(b_1),\veps_i(b_1)+\veps_i(b_2)-\vphi_i(b_1)),
\label{eps tensor}\\
\vphi_i(b_2\ot b_1)&=\max(\vphi_i(b_2),\vphi_i(b_1)+\vphi_i(b_2)-\veps_i(b_2)).
\label{phi tensor}
\end{align}
As seen from above we use the anti-Kashiwara convention for the tensor product
of crystals.

It is known \cite{KMN1:1992} that for KR crystals $B^{r,s},B^{r',s'}$ there exists an isomorphism of crystals
\[
R:B^{r,s}\ot B^{r',s'}\longrightarrow B^{r',s'}\ot B^{r,s},
\]
called combinatorial $R$-matrix. $R$ commutes with $e_i,f_i\,(i\in I)$. The image
of $R$ is given as follows \cite{Sh:2002}. Suppose $R(b\ot b')=\tilde{b}'\ot\tilde{b}$.
Then $b\rightarrow row(b')=\tilde{b}'\rightarrow row(\tilde{b})$, where $row(b)$
is the row word of $b$ and $T\rightarrow wd$ stands for the tableau given by row
inserting the word $wd$ into $T$. This condition uniquely determines $\tilde{b}'$ and $\tilde{b}$
from $b\ot b'$.

Let $B=B_k\ot\cdots\ot B_1$ be a multiple tensor product of KR crystals.
An element of $b$ of $B$ is called a (highest-weight) path if $e_ib=0$ for any
$i\in I_0$. The set of paths in $B$ is denoted by $\Path(B)$.

\begin{example} \label{ex:path}
\[
b=\Yvcentermath1
\young(112,234)\ot\young(2,3)\ot\young(1113)
\ot\young(2)\ot\young(1)
\smallskip
\]
is an element of $\Path(B^{2,3}\ot B^{2,1}\ot B^{1,4}\ot(B^{1,1})^2)$ of weight
$6\varpi_1+4\varpi_2-(4\alpha_1+4\alpha_2+\alpha_3)$.
\end{example}

\subsection{Rigged configuration}

We concentrate on rigged configurations of type $A_n^{(1)}$.  Let $(C_{ab})_{a,b\in I_0}$ be the Cartan
matrix of $A_n$, that is, $C_{ab}=2\delta_{a,b}-\delta_{a,b+1}-\delta_{a,b-1}$. Consider a matrix
$L=(L_i^{(a)})_{a\in I_0,i\in\Z_{>0}}$ of nonnegative integers, almost all zero. $L$ is called a multiplicity array.
Let $\nu=(m_i^{(a)})$ be another such matrix. Say that $\nu$ is an admissible configuration if
it satisfies
\begin{equation} \label{ppos}
  p_i^{(a)} \ge 0\qquad\text{for any $a\in I_0$ and
  $i\in\Z_{>0}$,}
\end{equation}
where
\begin{equation} \label{p}
p_i^{(a)} = \sum_{j\in\Z_{>0}} \left( L_j^{(a)} \min(i,j) -
\sum_{b\in I_0}C_{ab}\min(i,j)m_j^{(b)}\right).
\end{equation}
$p^{(a)}_i$ is called a vacancy number.

Let $\nu=(m^{(a)}_i)_{a\in I_0,i\in\Z_{>0}}$ be an admissible configuration. We identify $\nu$
with a sequence of partitions $(\nu^{(a)})_{a\in I_0}$ such that
\begin{equation} \label{nu}
\nu^{(a)}=(1^{m_1^{(a)}}2^{m_2^{(a)}}\cdots).
\end{equation}
One can also identify the partition $\nu^{(a)}$ with a Young diagram whose number of rows of
length $i$ is $m^{(a)}_i$. A rigging $J$ on $\nu$ is to associate, with each part of the 
Young diagram $\nu^{(a)}$ of the same width $i$, a partition $(J^{(a,i)}_1\ge J^{(a,i)}_2\ge\ldots\ge J^{(a,i)}_{m^{(a)}_i})$
of length at most $m^{(a)}_i$ such that $p^{(a)}_i\ge J^{(a,i)}_1$. A pair $(\nu,J)$ of an admissible configuration $\nu$ and
a rigging $J$ on $\nu$ is called a rigged configuration.

For a partition $\mu$ and $i\in\Z_{>0}$, define
\begin{equation} \label{Qdef}
Q_i(\mu)=\sum_j \min(\mu_j,i),
\end{equation}
the area of $\mu$ in the first $i$ columns. Then the vacancy number \eqref{p} is rewritten as
\begin{equation} \label{p and Q}
p^{(a)}_i=Q_i(L^{(a)})+Q_i(\nu^{(a-1)})+Q_i(\nu^{(a+1)})-2Q_i(\nu^{(a)}).
\end{equation}
Here $L^{(a)}$ is a partition $(1^{L_1^{(a)}}2^{L_2^{(a)}}\cdots)$, and both $\nu^{(0)}$ and $\nu^{(n+1)}$
should be considered as an empty partition. The set of rigged configurations with multiplicity array $L$ 
is denoted by $\RC(L)$. We define a weight of the rigged configuration $(\nu,J)$ by
\[
\la = \sum_{a\in I_0}\left(Q_\infty(L^{(a)})\varpi_a-Q_\infty(\nu^{(a)})\alpha_a\right).
\]
It does not depend on the rigging $J$.

\begin{example} \label{ex:RC}
The following diagrams show an example of rigged configuration of
weight $6\varpi_1+4\varpi_2-(4\alpha_1+4\alpha_2+\alpha_3)$.
\vspace{7mm}
\begin{center}
\unitlength 10pt
\begin{picture}(20,5)
\Yboxdim{7pt}
\put(1,4){\yng(4,1,1)}
\put(7,4.7){\yng(3,1)}
\Yboxdim{10pt}
\put(1,1){\yng(3,1)}
\put(0.2,1){1}
\put(0.2,2){1}
\put(2.2,1){0}
\put(4.2,2){0}
\put(7,1){\yng(3,1)}
\put(6.2,1){1}
\put(6.2,2){1}
\put(8.2,1){1}
\put(10.2,2){0}
\put(13,2){\yng(1)}
\put(12.2,2){0}
\put(14.2,2){0}
\end{picture}
\end{center}
Upper diagrams are $L^{(1)},L^{(2)},L^{(3)}$ ($L^{(3)}$ is empty) drawn from left 
to right and lower ones are $\nu^{(1)},\nu^{(2)},\nu^{(3)}$. On the left (resp. right) of each
row the corresponding vacancy number (resp. rigging) is written.
\end{example}

\section{KSS bijection and similarity}

\subsection{Operations on paths}

We define several operations on paths. We first define
$\lh,\lb^{(s)},\ls^{(m)}$. In this subsection $B$ is a tensor product of KR crystals.

\begin{enumerate}
\item Suppose $B=B^{1,1}\ot B',b=c\ot b'\in B^{1,1}\ot B'$. The map 
$\lh:B\rightarrow B'$ is defined by $\lh(b)=b'$. We set $\lh(B)=B'$.
\item Suppose $B=B^{r,s}\ot B',b=c\ot b'\in B^{r,s}\ot B'\,(r\ge2)$. The map 
$\lb^{(s)}:B\rightarrow B^{1,s}\ot B^{r-1,s}\ot B'$ is defined by 
$\lb^{(s)}(b)=c'\ot c''\ot b'$, where $c'$ is the lowest row of $c$ and $c''$ is
obtained by removing $c'$ from $c$. We set $\lb^{(s)}(B)=B^{1,s}\ot B^{r-1,s}\ot B'$.
\item Suppose $B=B^{r,s}\ot B',b=c\ot b'\in B^{r,s}\ot B'\,(s\ge2)$. The map 
$\ls^{(m)}:B\rightarrow B^{r,m}\ot B^{r,s-m}\ot B'\,(1\le m<s)$ is defined by 
$\ls^{(m)}(b)=c'\ot c''\ot b'$, where $c'$ is the leftmost $m$ columns of $c$ and 
$c''$ is obtained by removing $c'$ from $c$. We set $\ls^{(s)}(B)=
B^{r,m}\ot B^{r,s-m}\ot B'$.
\end{enumerate}
These maps send a path to another path. In \cite{Sch:Memoirs} operations 
$\lb^{(s)},\ls^{(m)}$ were defined only when $s=1$ and $m=1$. 
We need the extensions of them to prove our main result. 
We set $\lb=\lb^{(1)},\ls=\ls^{(1)}$. 

\begin{example}
For a path $b$ in Example \ref{ex:path} we have $b'=\ls(b),b''=\lb(b'),
b'''=\lh(b'')$ as follows.
\begin{align*}
b'&=\Yvcentermath1
\young(1,2)\ot\young(12,34)\ot\young(2,3)\ot\young(1113)
\ot\young(2)\ot\young(1) \\
\smallskip
b''&=\Yvcentermath1
\young(2)\ot\young(1)\ot\young(12,34)\ot\young(2,3)\ot\young(1113)
\ot\young(2)\ot\young(1) \\
\smallskip
b'''&=\Yvcentermath1
\young(1)\ot\young(12,34)\ot\young(2,3)\ot\young(1113)
\ot\young(2)\ot\young(1)
\smallskip
\end{align*}
\end{example}

For later use we need the transpose of a path $b$ given in \cite{Sch:Memoirs}.
Let $B=B^{r_k,s_k}\ot B^{r_{k-1},s_{k-1}}\ot\cdots\ot B^{r_1,s_1}$. For
$b=b_k\ot b_{k-1}\ot\cdots\ot b_1\in B$ rotate each rectangular tableau $b_i$ by
$90^{\circ}$ clockwise to obtain $\tilde{b}_i$. Suppose the letter $a$ occurs 
in cell $c$ of $\tilde{b}_i$. Then replace letter $a$ in cell $c$ by $\tilde{a}$ 
where $\tilde{a}$ is chosen such that the letter $a$ in cell $c$ is the 
$\tilde{a}$-th letter $a$ in $row(b)$ reading from right to left. Finally,
turn each tableau up side down and define it to be $\tr(b)$. If $b\in\Path(B)$,
then $\tr(b)\in\Path(\tr(B))$ where 
$\tr(B)=B^{s_k,r_k}\ot B^{s_{k-1},r_{k-1}}\ot\cdots\ot B^{s_1,r_1}$.
The map $\tr$ satisfies $\tr^2=\id$.

\begin{example}
For a path $b$ in Example \ref{ex:path} we have 
\[
\tr(b)=\Yvcentermath1
\young(13,35,46)\ot\young(22)\ot\young(1,2,3,4)
\ot\young(1)\ot\young(1)\,.
\smallskip
\]
\end{example}

Finally, we define the map $S_m$ on $\Path(B)$.
For an element $b=b_k\ot b_{k-1}\ot\cdots\ot b_1$ of $B$ we define 
$S_m(b)=S_m(b_k)\ot S_m(b_{k-1})\ot\cdots\ot S_m(b_1)$, where $S_m$ on
a single KR crystal was defined in \S\ref{subsec:KRcrystal}.
If $b\in\Path(B)$, then we have $S_m(b)\in\Path(S_m(B))$, 
where $S_m(B)$ is obtained by
replacing each single KR crystal $B^{r,s}$ in $B$ with $B^{r,ms}$.
It is easy to see that $S_m\circ S_{m'}=S_{mm'}$.

\begin{example}
For a path $b$ in Example \ref{ex:path} we have
\[
S_2(b)=\Yvcentermath1
\young(111122,223344)\ot\young(22,33)\ot\young(11111133)
\ot\young(22)\ot\young(11)\,.
\smallskip
\]
\end{example}

\subsection{Operations on rigged configurations}

We define the corresponding operations on rigged configurations. We first define
$\delta,\beta^{(s)},\gamma^{(m)}$. Say a row of a rigged configuration singular
if its rigging is equal to the vacancy number $p^{(a)}_i$.

\begin{enumerate}
\item Suppose $L^{(1)}$ contains a row of length 1.
Set $\ell^{(0)}=1$ and repeat the following process for $a=1,2,\ldots,n$ 
or until stopped. Find the smallest integer $i\ge\ell^{(a-1)}$ such that 
there exists a singular row of length $i$ in $(\nu,J)^{(a)}$. 
If no such $i$ exists, set $\mathrm{rk}(\nu,J)=a$ and stop. Otherwise set $\ell^{(a)}=i$
and continue the process with $a+1$. Set all undefined $\ell^{(a)}$ to $\infty$.
As for the new multiplicity array $\tilde{L}$, $\tilde{L}^{(1)}$ is given by removing 
a row of length 1 from $L^{(1)}$ and other $L^{(a)}$ remain the same.
$\delta(\nu,J)$ is obtained by removing 
a box from the selected rows and making the new rows singular again.

\item Suppose $L^{(r)}$ contains a row of length $s$. 
$\tilde{L}$ is given by removing a row of length $s$ from $L^{(r)}$ and
adding a row of length $s$ to both $L^{(1)}$ and $L^{(r-1)}$ (two rows of length
$s$ to $L^{(1)}$ if $r=2$).
$\beta^{(s)}(\nu,J)$ is obtained by adding singular rows of length $s$ to
$(\nu,J)^{(a)}$ for $1\le a<r$.

\item Suppose $L^{(r)}$ contains a row of length $s$. 
$\tilde{L}$ is given by removing a row of length $s$ from $L^{(r)}$ and
adding a row of length $m$ and a row of length $s-m$.
We set $\gamma^{(m)}(\nu,J)=(\nu,J)$.
\end{enumerate}

These maps send a rigged configuration in $\RC(L)$ to another one in 
$\RC(\tilde{L})$. In \cite{Sch:Memoirs} operations $\beta^{(1)},\gamma^{(1)}$ 
were defined in the notations $j,i$. We set $\beta=\beta^{(1)},\gamma=\gamma^{(1)}$.

\begin{example}
Let $(\nu,J)$ be as in Example \ref{ex:RC}. Then $(\nu',J')=\gamma(\nu,J),
(\nu'',J'')=\beta(\nu',J'),(\nu''',J''')=\delta(\nu'',J'')$ are given successively as
follows.
\vspace{7mm}
\begin{center}
\unitlength 10pt
\begin{picture}(20,6)
\Yboxdim{7pt}
\put(1,4){\yng(4,1,1)}
\put(7,4){\yng(2,1,1)}
\Yboxdim{10pt}
\put(1,1){\yng(3,1)}
\put(0.2,1){1}
\put(0.2,2){1}
\put(2.2,1){0}
\put(4.2,2){0}
\put(7,1){\yng(3,1)}
\put(6.2,1){2}
\put(6.2,2){1}
\put(8.2,1){1}
\put(10.2,2){0}
\put(13,2){\yng(1)}
\put(12.2,2){0}
\put(14.2,2){0}
\end{picture}
\end{center}
\vspace{9mm}
\begin{center}
\unitlength 10pt
\begin{picture}(20,6)
\Yboxdim{7pt}
\put(1,5){\yng(4,1,1,1,1)}
\put(7,7.1){\yng(2,1)}
\Yboxdim{10pt}
\put(1,1){\yng(3,1,1)}
\put(0.2,2){1}
\put(0.2,3){1}
\put(2.2,1){0}
\put(2.2,2){1}
\put(4.2,3){0}
\put(7,2){\yng(3,1)}
\put(6.2,2){2}
\put(6.2,3){1}
\put(8.2,2){1}
\put(10.2,3){0}
\put(13,3){\yng(1)}
\put(12.2,3){0}
\put(14.2,3){0}
\end{picture}
\end{center}
\vspace{7mm}
\begin{center}
\unitlength 10pt
\begin{picture}(20,6)
\Yboxdim{7pt}
\put(1,5){\yng(4,1,1,1)}
\put(7,6.4){\yng(2,1)}
\Yboxdim{10pt}
\put(1,2){\yng(3,1)}
\put(0.2,2){2}
\put(0.2,3){2}
\put(2.2,2){0}
\put(4.2,3){0}
\put(7,2){\yng(3,1)}
\put(6.2,2){1}
\put(6.2,3){0}
\put(8.2,2){1}
\put(10.2,3){0}
\put(13,3){\yng(1)}
\put(12.2,3){0}
\put(14.2,3){0}
\end{picture}
\end{center}
\end{example}

Next we define the map $\tr$ on rigged configurations. 
The new multiplicity array 
$\tilde{L}=(\tilde{L}^{(a)}_i)$ is defined by $\tilde{L}^{(a)}_i=L^{(i)}_a$.
We assume $n$ is sufficiently large so that $\tilde{L}$ is well defined.
For a configuration $\nu=(m^{(a)}_i)$ define a matrix $N=(N_{ai})_{a\in I_0,i\in\Z_{>0}}$
by
\begin{equation} \label{N}
N_{ai}=\sum_{j\ge i}\left(m^{(a-1)}_j-m^{(a)}_j\right).
\end{equation}
Note that $\sum_{j\ge i}m^{(a)}_i$ is the depth of the $i$-th column of $\nu^{(a)}$.
$m^{(0)}_j$ is defined to be zero for any $j$. Now set $(\tilde{\nu},\tilde{J})=\tr(\nu,J)$.
Then $\tilde{N}=N(\tilde{\nu})$ is given by
\begin{equation} \label{Ntilde}
\tilde{N}_{ia}=-N_{ai}+\chi((a,i)\in\la)-\sum_{b,j}L^{(b)}_j\chi(a\le b\,\&\,i\le j).
\end{equation}
Here $\chi(\theta)=1$ if $\theta$ is true, $=0$ otherwise. $\la$ is the partition 
$(\la_1,\la_2,\ldots)$ such that the weight of $(\nu,J)$ is given by
$\sum_j\la_j(\varpi_j-\varpi_{j-1})$ ($\varpi_0=0$),
and $(a,i)\in\la$ signifies that the cell of the $a$-th row and the $i$-th column
belongs to the Young diagram of $\la$.

Note that the rigging $\tilde{J}$ can be viewed as a double sequence of 
Young diagrams $J=(J^{(a,i)})$ where $J^{(a,i)}$ is a Young diagram inside 
the rectangle of depth $m^{(a)}_i$ and width $p^{(a)}_i$. The Young diagram
$\tilde{J}^{(i,a)}$ is defined as the transpose of the complementary Young
diagram to $J^{(a,i)}$ in the rectangle of depth $m^{(a)}_i$ and width $p^{(a)}_i$.
This $\tr$ also satisfies $\tr^2=\id$.
The following lemma, which is clear from the above rule, is used later.

\begin{lemma} \label{lem:tr and ls}
Let $B^{r,s}$ be the leftmost tensor factor of $B$. Then we have
$\ls\circ\tr=\tr\circ\lb^{(s)}$ on $\Path(B)$.
\end{lemma}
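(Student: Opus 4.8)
The plan is to unwind the definition of $\tr$ into its three successive stages --- rotate each tensor factor $90^\circ$ clockwise, relabel the entries via the word $row(b)$, and flip each factor upside down --- and to check that each stage intertwines the operation ``split off the bottom row of the leftmost factor'' ($\lb^{(s)}$) with the operation ``split off the leftmost column of the leftmost factor'' ($\ls$). Throughout, write $B=B^{r,s}\ot B'$ and $b=c\ot b'$ with $c$ a tableau of shape $(s^r)$; let $c'$ be the bottom row of $c$ and $c''$ the top $r-1$ rows, so that $\lb^{(s)}(b)=c'\ot c''\ot b'$, and note that both $\ls(\tr(b))$ and $\tr(\lb^{(s)}(b))$ a priori lie in $\Path(B^{s,1}\ot B^{s,r-1}\ot\tr(B'))$, since $\tr$, $\lb^{(s)}$ and $\ls$ each send paths to paths.

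First I would treat the relabeling stage. The key observation is that $\lb^{(s)}$ leaves the row word untouched: $row(\lb^{(s)}(b))=row(b)$, and each cell of $c$ sits at the same place in this word whether it is viewed as a cell of $c$ or as a cell of $c'\ot c''$ --- this is immediate from the reading conventions, the bottom row of $c$ being read as one contiguous block of $row(c)$ in exactly the position occupied by $c'$ after the split. Hence the rule ``replace the entry of a cell by its rank from the right among equal entries of $row(b)$'' produces the same new entry in that cell in the computation of $\tr(b)$ and of $\tr(\lb^{(s)}(b))$. Writing $\bar c$ for the array of shape $(s^r)$ obtained from $c$ by this relabeling, the relabeling stage sends $b$ to something with leftmost factor $\bar c$, and sends $\lb^{(s)}(b)$ to something whose first two factors are the bottom row $\bar c'$ and the top rows $\bar c''$ of $\bar c$, the remaining factors being the same in both cases.

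Next I would dispose of the rotation-and-flip stage by the following elementary fact about rectangles: rotating $\bar c$ (shape $(s^r)$) clockwise carries its bottom row to the leftmost column and its top $r-1$ rows to the remaining $r-1$ columns, and the subsequent vertical flip acts columnwise, so it preserves this splitting; thus $\tr(c)$ decomposes as $\tr(c')$ placed as its leftmost column followed by $\tr(c'')$ as its remaining columns. Applying the rotation-flip stage factorwise, we conclude that $\lb^{(s)}$ followed by the rotation-flip stage of $\tr$ yields the same list of tableaux as the rotation-flip stage of $\tr$ followed by splitting off the leftmost column of the leftmost factor, which is exactly $\ls$. Combining this with the previous paragraph gives $\ls(\tr(b))=\tr(\lb^{(s)}(b))$ as tensor products of tableaux, and since the ambient crystals coincide, the lemma follows. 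The one point that needs genuine care is the claim in the second step --- that the reading convention entering the definition of $\tr$ is such that $\lb^{(s)}$ preserves $row(b)$ cell by cell; once this is pinned down, everything else is bookkeeping about rectangles.
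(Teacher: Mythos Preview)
Your argument is correct and is precisely the unpacking that the paper leaves implicit: the paper's own proof is simply the remark ``clear from the above rule,'' so your three-stage verification (row word preserved by $\lb^{(s)}$, rotation sends bottom row to leftmost column, flip preserved columnwise) is exactly the bookkeeping underlying that claim. The one point you flag as needing care --- that $\lb^{(s)}$ preserves $row(b)$ cell by cell --- holds because the row reading of a rectangle starts with its bottom row and the tensor convention places $row(c')$ immediately before $row(c'')$ in $row(\lb^{(s)}(b))$, matching $row(c)$ inside $row(b)$.
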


\begin{example}
Let $(\nu,J)$ be as in Example \ref{ex:RC}. Then we have
\[
\la=\hspace{-4mm}
\mbox{
\unitlength 10pt
\begin{picture}(5,2)
\Yboxdim{7pt}
\put(1,-1.2){\yng(6,4,3,1)}
\end{picture}
},
\quad
N=\left(
\begin{array}{ccc}
-2&-1&-1\\
0&0&0\\
1&1&1\\
1&0&0
\end{array}
\right),
\quad
\tilde{N}=\left(
\begin{array}{cc}
-2&-1\\
0&0\\
0&0\\
0&1\\
1&0\\
1&0
\end{array}
\right).
\]
All entries of $N,\tilde{N}$ outside the given part are zero. Thus $\tr(\nu,J)$ 
is given as follows.
\begin{center}
\unitlength 10pt
\begin{picture}(28,7)
\Yboxdim{7pt}
\put(1,4){\yng(2,1,1)}
\put(13,5.4){\yng(2)}
\put(19,5.4){\yng(1)}
\Yboxdim{10pt}
\put(1,1){\yng(2,1)}
\put(0.2,1){1}
\put(0.2,2){1}
\put(2.2,1){1}
\put(3.2,2){0}
\put(7,1){\yng(2,1)}
\put(6.2,1){0}
\put(6.2,2){0}
\put(8.2,1){0}
\put(9.2,2){0}
\put(13,1){\yng(2,1)}
\put(12.2,1){1}
\put(12.2,2){1}
\put(14.2,1){1}
\put(15.2,2){1}
\put(19,1){\yng(1,1)}
\put(18.2,2){0}
\put(20.2,1){0}
\put(20.2,2){0}
\put(25,2){\yng(1)}
\put(24.2,2){0}
\put(26.2,2){0}
\end{picture}
\end{center}
\end{example}

Finally, operation $S_m$ on rigged configurations is defined by enlarging all rows 
of $L^{(a)}$ and $\nu^{(a)}$ $m$ times and multiplying $m$ to all riggings. Note
that $p^{(a)}_{mi}(S_m(\nu))=mp^{(a)}_i(\nu)$.
It is again easy to see that $S_m\circ S_{m'}=S_{mm'}$.

\begin{example}
Let $(\nu,J)$ be as in Example \ref{ex:RC}. $S_2(\nu,J)$ is given below.
\vspace{7mm}
\begin{center}
\unitlength 10pt
\begin{picture}(22,5)
\Yboxdim{7pt}
\put(1,4){\yng(8,2,2)}
\put(10,4.7){\yng(6,2)}
\Yboxdim{10pt}
\put(1,1){\yng(6,2)}
\put(0.2,1){2}
\put(0.2,2){2}
\put(3.2,1){0}
\put(7.2,2){0}
\put(10,1){\yng(6,2)}
\put(9.2,1){2}
\put(9.2,2){2}
\put(12.2,1){2}
\put(16.2,2){0}
\put(19,2){\yng(2)}
\put(18.2,2){0}
\put(21.2,2){0}
\end{picture}
\end{center}
\end{example}

\subsection{KSS bijection and the main theorem}

For a tensor product of KR crystals $B=B^{r_k,s_k}\ot B^{r_{k-1},s_{k-1}}\ot\cdots\ot
B^{r_1,s_1}$ we define the corresponding multiplicity array $L(B)=(L_i^{(a)})$ by
\begin{equation} \label{def L}
L_i^{(a)}=\sharp\{j\mid (r_j,s_j)=(a,i),1\le j\le k\}.
\end{equation}

The following theorem is proved essentially in \cite{KSS}. We adopt the formulation
in \cite{Sch:Memoirs}.

\begin{theorem} \label{th:KSS}
Let $B=B^{1,1}\ot B'$ in (1), $B=B^{r,1}\ot B'$ ($r\ge2$) in (2), and $B=B^{r,s}\ot B'$
($s\ge2$) in (3). There exists a unique bijection $\Phi$ from $\Path(B)$ to 
$\RC(L(B))$ that maps the empty path to the empty rigged configurations
and makes the following diagrams commutative.
\begin{align*}
&(1)
\diagram{\Path(B)}{\RC(L(B))}{\Path(\lh(B))}{\RC(L(\lh(B)))}{\Phi}{\lh}{\delta}{\Phi}\quad
(2)
\diagram{\Path(B)}{\RC(L(B))}{\Path(\lb(B))}{\RC(L(\lb(B)))}{\Phi}{\lb}{\beta}{\Phi}
\\
&(3)
\diagram{\Path(B)}{\RC(L(B))}{\Path(\ls(B))}{\RC(L(\ls(B)))}{\Phi}{\ls}{\gamma}{\Phi}
\end{align*}
Moreover, under $\Phi$, $\tr$ corresponds to $\tr$ and $R_i$ corresponds to
$\id$ for any $i=1,2,\ldots,k-1$ where $R_i$ stands for the combinatorial $R$-matrix
acting on the $(i+1)$-th and the $i$-th position $B_{i+1}\ot B_i$ of $B$.
\end{theorem}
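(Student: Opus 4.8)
The plan is to construct $\Phi$ by recursion on $B$. Observe first that the three cases are mutually exclusive and exhaustive: if $B=B^{r,s}\ot B'$ is a tensor product of KR crystals then either $(r,s)=(1,1)$, or $r\ge 2$ and $s=1$, or $s\ge 2$, so exactly one of $\lh$, $\lb$, $\ls$ applies; moreover each of these operations strictly decreases the statistic $\sum_j (r_js_j)^2$ of $B=B^{r_k,s_k}\ot\cdots\ot B^{r_1,s_1}$. Hence, reading the diagrams downward, $\Phi(b)$ is forced to equal $\delta^{-1}$, $\beta^{-1}$ or $\gamma^{-1}$ applied to $\Phi$ of the reduced path, and the base case fixes $\Phi$ on the empty path. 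This gives the uniqueness statement and reduces existence to showing that the resulting recursive prescription is well defined (each inverse being applicable) and bijective.

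For existence I would check, first, that $\delta$, $\beta=\beta^{(1)}$ and $\gamma=\gamma^{(1)}$ are well-defined maps $\RC(L)\to\RC(\tilde L)$: for $\beta$, that the length-one rows adjoined to $(\nu,J)^{(a)}$ for $1\le a<r$ can be made singular and that admissibility \eqref{ppos} survives, which is a direct computation with \eqref{p and Q}; for $\gamma$, that the unchanged data $(\nu,J)$ remains admissible with valid riggings for the new multiplicity array, which follows from $\min(m,i)+\min(s-m,i)\ge\min(s,i)$ so that vacancy numbers only increase; for $\delta$, that the search terminates, that removing a box from the selected rows and re-singularising gives an admissible configuration with valid riggings, and that $(\delta(\nu,J),\mathrm{rk}(\nu,J))$ recovers $(\nu,J)$. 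The substantive input — established in \cite{KKR,KSS}, and the step I expect to be the main obstacle — is that the box-adding inverse $\delta^{-1}$, given a target rigged configuration together with a rank, is well defined and is two-sided inverse to $\delta$. Granting this and the easier facts for $\beta^{-1},\gamma^{-1}$, bijectivity of $\Phi$ follows once one matches, under the bijection being built, the pair $(\lh,a)$ (with $a$ the letter of the removed $B^{1,1}$) to $(\delta,\mathrm{rk})$ and the injections $\lb,\ls$ to $\beta^{-1},\gamma^{-1}$; the only non-transparent matching is for $\gamma$, where one must identify $\Phi(\ls(\Path(B)))$ with the image of $\gamma$, since $\ls$ is not surjective. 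Finally one notes the weight is preserved at each step, so $\Phi(b)\in\RC(L(B))$.

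It remains to treat $\tr$ and the combinatorial $R$-matrices. For $\tr$, I would argue by induction on the same statistic using Lemma \ref{lem:tr and ls} together with the analogous compatibilities of $\tr$ with the peeling operations and with $\delta$, $\beta^{(s)}$, $\gamma^{(m)}$ on both sides; the rigged-configuration transpose was defined through \eqref{Ntilde} precisely so that these squares commute, so the induction closes once the base case and the row/column bookkeeping are checked. For the $R$-matrices, if $R_i$ acts on factors strictly to the right of the leftmost one, i.e.\ $i\le k-2$, then $R_i$ commutes with each of $\lh$, $\lb$, $\ls$, so $\Phi\circ R_i=\Phi$ follows by induction from the characterisation above. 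The remaining case is $R_{k-1}$, acting on the two leftmost factors; here no reduction is available, and the claim that it corresponds to the identity on $\RC(L(B))$ must be proved directly, by tracking the algorithms defining $\delta,\beta,\gamma$ against the row-insertion description of $R$ from \S\ref{subsec:path}. This is the other delicate point, and is exactly where the analysis of \cite{KSS}, in the formulation of \cite{Sch:Memoirs}, is invoked; with these two inputs the whole statement follows by the induction sketched here.
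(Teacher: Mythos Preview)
The paper does not prove Theorem~\ref{th:KSS}; it is quoted as an input from the literature, with the sentence ``The following theorem is proved essentially in \cite{KSS}. We adopt the formulation in \cite{Sch:Memoirs}.'' So there is no in-paper argument to compare your sketch against: the ``paper's own proof'' is a citation.

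That said, your outline is a faithful reconstruction of how the KSS proof is organised: recursive definition of $\Phi$ via the three peeling moves, reversibility of $\delta,\beta,\gamma$ as the substantive combinatorial step, and separate treatment of $\tr$ and of the $R$-matrices. Two minor remarks. First, your termination statistic $\sum_j (r_js_j)^2$ is correct but differs from the one the paper later uses (in the proof of Theorem~\ref{th:main}) for the same induction, namely the lexicographic triple $\bigl(\sum_j r_js_j,\ \sum_j (r_j-1)s_j,\ \sum_j (s_j-1)\bigr)$; either works. Second, you correctly flag the two places where real work happens in \cite{KSS}: the well-definedness and two-sided invertibility of $\delta$, and the direct verification that the leftmost $R$-matrix corresponds to the identity on rigged configurations. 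In the paper these are not reproved but taken from \cite{KSS,Sch:Memoirs}, so your proposal goes beyond what the paper itself supplies.
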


Now we can state our theorem.

\begin{theorem} \label{th:main}
Under $\Phi$, $S_m$ corresponds to $S_m$.
\end{theorem}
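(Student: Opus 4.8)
The plan is to prove that $\Phi\circ S_m=S_m\circ\Phi$ as maps $\Path(B)\to\RC(L(S_m(B)))$ (note that $L(S_m(B))=S_m(L(B))$, i.e.\ $L(B)$ with every row length multiplied by $m$) by induction on the complexity of $B$ — say, on the number of steps $\lh,\lb^{(1)},\ls^{(1)}$ needed to reduce a path in $B$ to the empty path — and to use the uniqueness characterization of $\Phi$ in Theorem~\ref{th:KSS}. The base case is immediate, since $S_m$ fixes both the empty path and the empty rigged configuration. For the inductive step I would peel one step off $b\in\Path(B)$ according to the leftmost factor $B^{r,s}$ of $B$, peel the matching step off $S_m(b)$ — whose leftmost factor is $B^{r,ms}$ — and combine the inductive hypothesis applied to the smaller path with the commutations of Theorem~\ref{th:KSS} and the explicit column-duplication description of $S_m$ from~\S\ref{subsec:KRcrystal}.

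The two ``trivial'' cases are routine. If $s\ge2$, the column-duplication description of $S_m$ gives $\ls^{(m)}(S_m(b))=S_m(\ls^{(1)}(b))$; combining this with the analogue of Theorem~\ref{th:KSS}(3) for $\ls^{(m)}$ and $\gamma^{(m)}$ (this is the reason the extended operations $\ls^{(m)}$ are needed), with the inductive hypothesis, with the fact that $\gamma^{(m)}$ acts only on the multiplicity array so that $S_m\circ\gamma^{(1)}=\gamma^{(m)}\circ S_m$, and with the injectivity of $\gamma^{(m)}$, one obtains $\Phi(S_m(b))=S_m(\Phi(b))$. If $r\ge2$ and $s=1$ the same scheme works with $\lb^{(1)},\lb^{(m)},\beta^{(1)},\beta^{(m)}$ in place of $\ls^{(1)},\ls^{(m)},\gamma^{(1)},\gamma^{(m)}$; here one uses in addition that $S_m$ sends a singular row of length $1$ to a singular row of length $m$ and scales vacancy numbers ($p^{(a)}_{mi}(S_m(\nu))=m\,p^{(a)}_i(\nu)$), whence $S_m\circ\beta^{(1)}=\beta^{(m)}\circ S_m$.

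The content is the case $B=B^{1,1}\ot B'$. Here $b=c\ot b'$ with $c$ a single box, say with entry $x$, so $S_m(b)$ has leftmost factor the constant element $\underbrace{x\cdots x}_{m}\in B^{1,m}$. Reducing this factor to nothing by repeatedly applying $\ls^{(1)}$ and $\lh$ amounts, on the rigged-configuration side, to $m$ applications of $\delta$ (the intervening $\gamma^{(1)}$'s acting trivially on $(\nu,J)$), with the usual bookkeeping whereby the size-$m$ part of $L^{(1)}$ loses one box per round; in each round the letter removed by $\lh$ is again $x$. Thus the statement reduces to: applying $\delta$ in this fashion $m$ times to $S_m(\nu,J)$ produces $S_m(\delta(\nu,J))$, each round returning the rank $\mathrm{rk}(\nu,J)$. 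For the first round this is immediate, because $S_m(\nu)$ has rows only of lengths divisible by $m$ with riggings and vacancy numbers multiplied by $m$: the rows selected are exactly the $m$-fold dilations of those selected by $\delta$ on $(\nu,J)$, of lengths $m\ell^{(1)}\le\cdots\le m\ell^{(\mathrm{rk}-1)}$, and the rank is unchanged.

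For the remaining rounds I would argue by a secondary induction on the round index $i\le m$, showing that round $i$ selects, in each colour $a$, the descendant of the row selected in round $1$, now of length $m\ell^{(a)}-(i-1)$ (singular, since $\delta$ makes it singular again), and returns rank $\mathrm{rk}(\nu,J)$. \emph{This is the step I expect to be the main obstacle}: one must verify that between rounds no other row becomes eligible, i.e.\ that the dilated rows of length $mj$ with $\ell^{(a-1)}\le j\le\ell^{(a)}-1$ (and, in colour $\mathrm{rk}$, those of length $\ge m\ell^{(\mathrm{rk}-1)}$) have not become singular. A direct computation with~\eqref{p and Q} — keeping track of the rows selected in colours $a\pm1,a$ having been shortened by $i-1$ and of the size-$m$ part of $L^{(1)}$ having shrunk to size $m-i+1$ — shows that the vacancy number of each such row has dropped by exactly $i-1$; and since $S_m$ multiplies coriggings by $m$, their coriggings are at least $m>i-1$, so they stay non-singular. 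Granting this, after all $m$ rounds each selected row has been shortened by $m$, from $m\ell^{(a)}$ to $m(\ell^{(a)}-1)$, and no other rigging is touched, so the result is precisely $S_m(\delta(\nu,J))$. Combining this with the inductive hypothesis $\Phi(S_m(b'))=S_m(\Phi(b'))$, with $\delta(\Phi(b))=\Phi(b')$ from Theorem~\ref{th:KSS}(1), with the standard fact that the rank recorded by $\delta$ equals the entry of the box removed by $\lh$, and with the invertibility of $\delta$ once the recorded rank is fixed, gives $\Phi(S_m(b))=S_m(\Phi(b))$ and closes the induction.
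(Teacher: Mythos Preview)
Your approach is essentially the same as the paper's: the same three-case induction on the leftmost factor, the same ``cube'' style argument (you phrase it as combining commutativities plus injectivity; the paper invokes the cube lemma of \cite[Lemma 5.3]{KSS} explicitly), and your ``main obstacle'' in the $B^{1,1}$ case is precisely the content of Lemma~\ref{lem:m boxes}, with the same vacancy-number bookkeeping and the same corigging-$\ge m$ argument.

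The one point where you are thinner than the paper is in the ``trivial'' cases. You invoke ``the analogue of Theorem~\ref{th:KSS}(3) for $\ls^{(m)}$ and $\gamma^{(m)}$'' and likewise for $\lb^{(m)}$, $\beta^{(m)}$, but these analogues are not part of Theorem~\ref{th:KSS} (which only covers $m=1$) and are not automatic. In the paper they are Propositions~\ref{prop:lb(s)} and~\ref{prop:ls(m)}, each with a genuine proof: the $\lb^{(s)}$/$\beta^{(s)}$ diagram is obtained by conjugating by the transpose map $\tr$ (Lemma~\ref{lem:tr and ls}) and a direct check on the matrices $N$, $\tilde{N}$; the $\ls^{(m)}$/$\gamma^{(m)}$ diagram is obtained by an induction on $m$ that factors $\ls^{(m)}$ through a sequence of $\ls^{(m-1)}$, combinatorial $R$-matrices, and $\ls^{\pm1}$ (Proposition~\ref{prop:R}), all of which correspond to the identity on the rigged-configuration side. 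Neither of these is hard, but neither is a one-liner; you should at least indicate how you intend to establish them.
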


We need to prepare lemmas to prove it.

\begin{lemma}
Suppose $(\nu,J)\in\RC(L)$ with $L^{(1)}_1>0$. We apply $\delta$ on $(\nu,J)$.
Let $\ell^{(1)}\le\ell^{(2)}\le\cdots\le\infty$ be the lengths of rows whose box is 
removed from $\nu^{(1)},\nu^{(2)},\ldots$ by $\delta$ and set 
$(\tilde{\nu},\tilde{J})=\delta(\nu,J)$. Then we have
\[
p^{(a)}_i(\tilde{\nu})=p^{(a)}_i(\nu)-\chi(\ell^{(a-1)}\le i)+2\chi(\ell^{(a)}\le i)-\chi(\ell^{(a+1)}\le i).
\]
\end{lemma}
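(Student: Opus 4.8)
The plan is to compute the change in the vacancy numbers directly from the formula \eqref{p and Q}, tracking how $\delta$ modifies the partitions $\nu^{(a)}$ and the multiplicity array $L$. Recall from \eqref{p and Q} that
\[
p^{(a)}_i=Q_i(L^{(a)})+Q_i(\nu^{(a-1)})+Q_i(\nu^{(a+1)})-2Q_i(\nu^{(a)}).
\]
By the description of $\delta$, the new multiplicity array $\tilde L$ differs from $L$ only in $L^{(1)}$, where one row of length $1$ is removed; hence $Q_i(\tilde L^{(1)})=Q_i(L^{(1)})-1$ for all $i\ge1$ and $Q_i(\tilde L^{(a)})=Q_i(L^{(a)})$ for $a\ge2$. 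On the configuration side, $\delta$ removes exactly one box from a row of length $\ell^{(a)}$ in $\nu^{(a)}$ for each $a$ with $\ell^{(a)}<\infty$ (and does nothing to $\nu^{(a)}$ when $\ell^{(a)}=\infty$), then re-riggs; the re-rigging is irrelevant since vacancy numbers do not depend on $J$. So the whole proof reduces to understanding how $Q_i$ changes when a single box is removed from the end of a row of length $\ell$.

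The key computational step is the elementary identity: if $\mu'$ is obtained from a partition $\mu$ by deleting one box from a row of length $\ell$ (so that row becomes length $\ell-1$), then
\[
Q_i(\mu')=Q_i(\mu)-\chi(\ell\le i),
\]
because $\min(\ell-1,i)=\min(\ell,i)-\chi(\ell\le i)$ for all $i\ge1$, and all other rows contribute identically. Applying this with $\mu=\nu^{(b)}$, $\ell=\ell^{(b)}$ for each relevant $b$ (and noting the identity holds trivially with the convention $\chi(\infty\le i)=0$ when $\ell^{(b)}=\infty$), we get $Q_i(\tilde\nu^{(b)})=Q_i(\nu^{(b)})-\chi(\ell^{(b)}\le i)$ for every $b\in I_0$, and of course $Q_i(\tilde\nu^{(0)})=Q_i(\tilde\nu^{(n+1)})=0$ unchanged.

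Now I would just substitute into \eqref{p and Q}. For a fixed $a\in I_0$,
\[
p^{(a)}_i(\tilde\nu)=\bigl(Q_i(L^{(a)})-\delta_{a,1}\bigr)+\bigl(Q_i(\nu^{(a-1)})-\chi(\ell^{(a-1)}\le i)\bigr)+\bigl(Q_i(\nu^{(a+1)})-\chi(\ell^{(a+1)}\le i)\bigr)-2\bigl(Q_i(\nu^{(a)})-\chi(\ell^{(a)}\le i)\bigr),
\]
with the understanding that $\ell^{(0)}=1$ (so $\chi(\ell^{(0)}\le i)=1$ for all $i\ge1$), which exactly cancels the $-\delta_{a,1}$ term coming from the change in $L^{(1)}$; for $a\ge2$ the term $\chi(\ell^{(a-1)}\le i)$ with $\ell^{(1)}\ge1$ is handled uniformly. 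Collecting terms gives
\[
p^{(a)}_i(\tilde\nu)=p^{(a)}_i(\nu)-\chi(\ell^{(a-1)}\le i)-\chi(\ell^{(a+1)}\le i)+2\chi(\ell^{(a)}\le i),
\]
which is the claimed formula. The only subtlety — and the one point deserving care rather than being a genuine obstacle — is the boundary bookkeeping: matching the $-1$ from $Q_i(L^{(1)})$ against the convention $\ell^{(0)}=1$ at $a=1$, and checking that when the process stops at rank $\mathrm{rk}(\nu,J)=a$ (so $\ell^{(a)}=\ell^{(a+1)}=\cdots=\infty$) all the $\chi(\ell^{(b)}\le i)$ terms for $b\ge a$ vanish consistently, so that the single formula covers every $a$ simultaneously. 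Once this is set up the verification is a one-line substitution.
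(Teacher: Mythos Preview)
Your approach is exactly the paper's --- the paper's own proof is the single line ``Easy from \eqref{p and Q}'' --- and you have correctly spelled out that computation.

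One small slip in the write-up: in your displayed substitution the $a=1$ case double-counts. Since $\nu^{(0)}$ is empty and untouched by $\delta$, we have $Q_i(\tilde\nu^{(0)})=Q_i(\nu^{(0)})=0$ with no $-\chi(\ell^{(0)}\le i)$ correction; hence the $-\delta_{a,1}$ coming from $\tilde L^{(1)}$ and your $-\chi(\ell^{(0)}\le i)$ are not two separate terms that ``cancel'' (they carry the same sign) but one and the same contribution, relabelled via the convention $\ell^{(0)}=1$. Dropping one of them from the display fixes it, and your final formula is unaffected.
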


\begin{proof}
Easy from \eqref{p and Q}.
\end{proof}

\begin{lemma} \label{lem:m boxes}
Suppose $(\nu,J)\in\RC(L)$ with $L^{(1)}_1>0$.
Let $(\nu',J')=S_m(\nu,J)$. Then we can apply 
$\delta\circ(\delta\circ\gamma)^{m-1}$ on $(\nu',J')$.
During the operation $m$ boxes are removed from the same row in each $\nu'^{(a)}$.
\end{lemma}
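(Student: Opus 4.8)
The plan is to unwind the definitions of $S_m$, $\gamma$, and $\delta$ and track exactly which rows are singular at each stage. Write $(\nu',J')=S_m(\nu,J)$; by construction every part of every $\nu'^{(a)}$ has length in $m\Z_{>0}$, every vacancy number satisfies $p^{(a)}_{mi}(\nu')=m\,p^{(a)}_i(\nu)$, and every rigging is a multiple of $m$, so a row of $\nu'^{(a)}$ of length $mi$ is singular if and only if the corresponding row of $\nu^{(a)}$ of length $i$ is singular. Since $L^{(1)}_1>0$ we have $L(S_m)^{(1)}_m>0$ but $L(S_m)^{(1)}_1=0$; the first thing to check is that $\gamma^{(m-1)}$ (in the sense of operation (3) on rigged configurations with $r=1$, $s=m$) is applicable, replacing a row of length $m$ in $L'^{(1)}$ by rows of length $m-1$ and $1$, and leaves $(\nu',J')$ untouched. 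This is where the generality of the $\gamma^{(m)}$ operation introduced earlier is needed.

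First I would analyze a single application of $\delta$ to $(\nu',J')$. Run the algorithm of operation (1): $\ell^{(0)}=1$, and at step $a$ we seek the smallest $i\ge\ell^{(a-1)}$ with a singular row of length $i$ in $(\nu',J')^{(a)}$. By the correspondence of singular rows above, the only candidate lengths are multiples of $m$, so the first selected length is $m\ell^{(1)}$ where $\ell^{(1)}$ is the length $\delta$ would select in $(\nu,J)^{(1)}$; inductively the lengths selected by $\delta$ on $(\nu',J')$ are exactly $m\ell^{(1)}\le m\ell^{(2)}\le\cdots$, where $\ell^{(1)}\le\ell^{(2)}\le\cdots$ are those selected by $\delta$ on $(\nu,J)$ (the constraint "$i\ge\ell^{(a-1)}$" passes through unchanged after dividing by $m$ because $m\ell^{(a-1)}$ is the new lower bound and any multiple of $m$ that is $\ge m\ell^{(a-1)}$ is $\ge\ell^{(a-1)}$ after dividing, and conversely). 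Thus $\delta$ removes one box from the row of length $m\ell^{(a)}$ in $\nu'^{(a)}$, producing a row of length $m\ell^{(a)}-1$, and re-singularizes it; call the result $(\nu^{(1)},J^{(1)})$.

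Next I would show the alternation $(\delta\circ\gamma)^{m-1}$ continues to strip the same rows. After the first $\delta$, the multiplicity array has a row of length $m-1$ in $L^{(1)}$ (coming from the $\gamma$ we applied before the first $\delta$), and the key point is that the row of length $m\ell^{(a)}-1$ just created in $\nu^{(a)}$ is singular by construction, so the first "$1$" supplied to $L^{(1)}$ by the next $\gamma$ (splitting the length-$(m-1)$ row into lengths $m-2$ and $1$) lets $\delta$ fire again. I would verify inductively that at the $t$-th stage ($1\le t\le m$) the row of length $m\ell^{(a)}-t$ in $\nu^{(a)}$ is the one selected and that it remains singular after re-singularization — this uses the previous lemma's formula for how $p^{(a)}_i$ changes under $\delta$, together with the fact that at each stage the selected lengths $m\ell^{(a)}-t$ are still weakly increasing in $a$ (so the algorithm's monotonicity constraint is met) and that no shorter singular row of an intermediate length sneaks in, which follows because between stages the only partitions that change are the $\nu^{(a)}$ at the selected rows and the vacancy numbers change in a controlled way localized to those lengths. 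After $m$ applications of $\delta$ (interleaved with $m-1$ copies of $\gamma$), each $\nu'^{(a)}$ has lost exactly $m$ boxes, all from the single row of length $m\ell^{(a)}$, which becomes a row of length $m(\ell^{(a)}-1)=m\ell^{(a)}-m$ — precisely $S_m$ applied to the row of length $\ell^{(a)}-1$ that a single $\delta$ on $(\nu,J)$ would produce.

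The main obstacle I expect is the bookkeeping in the inductive step showing that at each of the $m$ stages the \emph{same} row continues to be selected and stays singular: one must rule out the possibility that, after removing a box and re-singularizing, some other row of an intermediate length (necessarily not a multiple of $m$, hence only created during this process) becomes singular and is picked up first by the algorithm's "smallest $i$" rule. Controlling this requires a careful look at the vacancy-number changes $p^{(a)}_i\mapsto p^{(a)}_i-\chi(\ell^{(a-1)}\le i)+2\chi(\ell^{(a)}\le i)-\chi(\ell^{(a+1)}\le i)$ restricted to the narrow band of lengths between $m(\ell^{(a)}-1)$ and $m\ell^{(a)}$, and checking that the rigging of every newly shortened row is updated to stay equal to the (new) vacancy number while no previously non-singular row of those intermediate lengths can become singular. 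Everything else is a direct translation through the definition of $S_m$ on rigged configurations.
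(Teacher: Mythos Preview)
Your plan is correct and follows essentially the same route as the paper: first show the initial $\delta$ selects the rows of length $m\ell^{(a)}$ because $S_m$ preserves singularity, then inductively show each subsequent $\delta\circ\gamma$ hits the same (now shortened) rows, using the previous lemma's formula for the change in $p^{(a)}_i$. The paper resolves your flagged ``main obstacle'' with exactly the observation you are circling: every untouched rigging in $\tilde{\nu}^{(a)}$ remains a multiple of $m$, while in the relevant interval $m\ell^{(a-1)}\le i<m\ell^{(a)}$ the vacancy number has dropped by $1$ (hence is \emph{not} a multiple of $m$), so no such row can become singular; the only singular row in range is the freshly shortened one of length $m\ell^{(a)}-1$, and at $i=m\ell^{(a-1)}-1$ there is simply no row.
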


\begin{proof}
Let $\ell^{(1)},\ell^{(2)},\ldots$ be as in the previous lemma.
Since a singular row remains singular and a nonsingular row does nonsingular by $S_m$, 
by the first $\delta\circ\gamma$
a box is removed from a row of length $m\ell^{(a)}$ in each $\nu'^{(a)}$.

Now we apply next $\delta\circ\gamma$ on $(\tilde{\nu},\tilde{J})=(\delta\circ\gamma)
(\nu',J')$. We show that during this process a box is removed from a singular row in
$\tilde{\nu}^{(a)}$ of length $m\ell^{(a)}-1$ proceeding with $a=1,2,\ldots$. Suppose
$\ell^{(a)}<\infty$. From the previous lemma, we have
\[
p^{(a)}_i(\tilde{\nu})-p^{(a)}_i(\nu')=\left\{
\begin{array}{ll}
0\quad&\text{if }i<m\ell^{(a-1)}\\
-1&\text{if }m\ell^{(a-1)}\le i<m\ell^{(a)}.
\end{array}
\right.
\]
Here $\ell^{(0)}$ should be understood as $1/m$. We look for a singular row
in $\tilde{\nu}^{(a)}$ of length not less than $m\ell^{(a-1)}-1$. 
Suppose $\ell^{(a-1)}<\ell^{(a)}$. In the interval $i<m\ell^{(a-1)}$ we only
need to consider the case of $i=m\ell^{(a-1)}-1$. However, there is no row
in $\tilde{\nu}^{(a)}$ of this length. In the interval $m\ell^{(a-1)}\le i<m\ell^{(a)}$
we have $p^{(a)}_i(\tilde{\nu})=p^{(a)}_i(\nu')-1$. However, all the riggings in 
$\tilde{\nu}^{(a)}$ are multiple of $m$ and hence remain nonsingular, 
except the one in the row of length 
$m\ell^{(a)}-1$ from which a box is removed in the previous $\delta$.
Since this row is singular, we remove a box from it. When $\ell^{(a-1)}=\ell^{(a)}$,
from the same reason a box is removed from a unique row of length $m\ell^{(a)}-1$.
Suppose now $\ell^{(a)}=\infty$. Then all rows of length not less than 
$m\ell^{(a-1)}-1$ are nonsingular.

We continue the application of $\delta(\circ\gamma)$ but the fact that a box is
removed from the same row in each $\nu'^{(a)}$ remains true.
\end{proof}

\begin{proposition} \label{prop:lb(s)}
Suppose the leftmost factor of $B$ is $B^{r,s}$. 
Then the following diagram commutes.
\[
\diagram{\Path(B)}{\RC(L(B))}{\Path(\lb^{(s)}(B))}
{\RC(L((\lb^{(s)}(B)))}{\Phi}{\lb^{(s)}}{\beta^{(s)}}{\Phi}
\]
\end{proposition}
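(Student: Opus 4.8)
The plan is to reduce the statement for a single $B^{r,s}$-factor with $s\ge1$ arbitrary to the already-known case $s=1$ (Theorem \ref{th:KSS}(2), i.e. $\lb=\lb^{(1)}\leftrightarrow\beta=\beta^{(1)}$), by passing through the similarity map. Concretely, I would first rewrite $\lb^{(s)}$ in terms of the transpose $\tr$ and the split map $\ls$: by Lemma \ref{lem:tr and ls}, on $\Path(B)$ with leftmost factor $B^{r,s}$ we have $\lb^{(s)}=\tr\circ\ls\circ\tr$ applied appropriately, where after transposing the leftmost factor becomes $B^{s,r}$ and $\ls$ cuts off one column. So on the path side the identity $\lb^{(s)}=\tr\circ\ls\circ\tr$ holds. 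On the rigged-configuration side I would establish the matching identity $\beta^{(s)}=\tr\circ\gamma^{(r-?)}\circ\tr$ — more carefully, $\tr$ turns the multiplicity-array change of $\gamma$ (splitting a row of $L^{(s)}$ into a row of length $1$ and a row of length $r-1$, in the transposed labelling) into exactly the multiplicity-array change of $\beta^{(s)}$ (removing a row of length $s$ from $L^{(r)}$ and adding rows of length $s$ to $L^{(1)}$ and $L^{(r-1)}$), and I would check that $\gamma$ acting as the identity on $(\nu,J)$ corresponds under $\tr$ to adding the singular rows that $\beta^{(s)}$ prescribes. Granting these two identities, the proposition follows from the commutativity of $\tr$ with $\Phi$ (Theorem \ref{th:KSS}) together with the already-established commuting square for $\gamma^{(m)}$; I would draw the obvious $3\times1$ diagram and paste.

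An alternative, more self-contained route — and the one I would actually develop if the transpose reduction runs into trouble with the $r=2$ special case (where $\beta^{(s)}$ adds \emph{two} rows of length $s$ to $L^{(1)}$) — is the similarity argument. Choose $m$ and observe that $S_m(B^{r,s})=B^{r,ms}$, so it suffices to prove the statement after applying $S_m$, since $S_m$ is injective on paths and, once Theorem \ref{th:main} is granted, corresponds to $S_m$ on rigged configurations (but note: Theorem \ref{th:main} is proved \emph{using} this proposition, so I must be careful not to make the argument circular — hence I would only use the similarity \emph{structure}, not Theorem \ref{th:main} itself). The honest version: prove directly, by induction on $s$, that $\lb^{(s)}$ decomposes as $\ls^{(1)}\circ\ls^{(1)}\circ\cdots$ followed by a sequence of $R$-matrices and a single $\lb^{(1)}$, and that correspondingly $\beta^{(s)}$ decomposes as $\gamma^{(1)}\circ\cdots\circ\gamma^{(1)}\circ\beta^{(1)}$ up to the $R_i\leftrightarrow\id$ moves. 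Explicitly: to split off the bottom row of $c\in B^{r,s}$ as a $B^{1,s}$, first use $\ls$ repeatedly to split $B^{r,s}$ into $s$ copies of $B^{r,1}$, apply $\lb$ to the leftmost $B^{r,1}$ to extract a $B^{1,1}$, move it past the intervening factors by $R$-matrices, re-merge the $B^{r-1,1}$ pieces and the $B^{1,1}$ pieces using the inverse splittings, and track that at each stage the rigged-configuration side does the corresponding $\gamma$, $\beta$, or $\id$. Since $\Phi$ intertwines all of $\ls\leftrightarrow\gamma$, $\lb\leftrightarrow\beta$, $R_i\leftrightarrow\id$, and $\tr\leftrightarrow\tr$ by Theorem \ref{th:KSS}, the composite diagram commutes.

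The key steps, in order, are: (i) prove the combinatorial identity $\lb^{(s)}=(\text{remerge})\circ R\text{-moves}\circ\lb^{(1)}\circ(\text{split into }B^{r,1}\text{'s})$ on $\Path(B)$ — this is a direct check on semistandard tableaux and row-insertion, using that $\ls^{(m)}$ just cuts columns; (ii) prove the parallel identity $\beta^{(s)}=(\text{remerge }\gamma\text{'s})\circ\beta^{(1)}\circ(\text{split }\gamma\text{'s})$ on $\RC(L)$ — this is pure bookkeeping on multiplicity arrays and on which singular rows get added, using that $\gamma$ is the identity on $(\nu,J)$ and only changes $L$; (iii) assemble the large commuting diagram from the elementary squares of Theorem \ref{th:KSS}. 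The main obstacle I anticipate is step (i): one must verify that the combinatorial $R$-matrix moves used to transport the extracted $B^{1,1}$ (or $B^{1,s}$) factor leftward past the leftover $B^{r-1,\bullet}$ factors genuinely reassemble into the single $B^{1,s}\ot B^{r-1,s}$ prescribed by $\lb^{(s)}$, rather than some $R$-matrix-twisted version — equivalently, that the order in which columns are split and the bottom row is extracted does not matter up to $R$. I expect this to follow from the uniqueness clause in Theorem \ref{th:KSS} (the bijection commuting with all three elementary operations and with $R$ is unique) combined with the fact that $R_i\leftrightarrow\id$, so that any two such assemblies have the same image under $\Phi$ and hence — $\Phi$ being a bijection — coincide; but making that argument airtight, especially handling the $r=2$ doubling in $\beta^{(s)}$, is where the real work lies.
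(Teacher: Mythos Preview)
Your first plan—the transpose reduction—is exactly the paper's proof, and it goes through without the complications you anticipate. Lemma~\ref{lem:tr and ls} gives $\ls\circ\tr=\tr\circ\lb^{(s)}$ with $\ls=\ls^{(1)}$, so only $\gamma=\gamma^{(1)}$ from Theorem~\ref{th:KSS}(3) is needed (your ``$\gamma^{(r-?)}$'' is just $\gamma$; there is no dependence on Proposition~\ref{prop:ls(m)} and hence no circularity). Since $\Phi$ intertwines $\tr\leftrightarrow\tr$ and $\ls\leftrightarrow\gamma$, and $\gamma$ fixes $(\nu,J)$, everything reduces to verifying $\tr(\beta^{(s)}(\nu,J))=\tr(\nu,J)$ on the rigged-configuration side. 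The paper does this by direct substitution into the explicit formulas: with $m'^{(a)}_j=m^{(a)}_j+\chi(a<r)\delta_{js}$ and $L'^{(b)}_j=L^{(b)}_j+(\delta_{b,r-1}+\delta_{b,1}-\delta_{b,r})\delta_{js}$, one plugs into \eqref{N}--\eqref{Ntilde} and checks $\tilde N'_{ia}=\tilde N_{ia}$; for the riggings, the extra singular part that $\beta^{(s)}$ adds to $J'^{(a,s)}$ has length $p^{(a)}_s=p'^{(a)}_s$, so under complement-then-transpose it contributes nothing and $\tilde J'=\tilde J$. The $r=2$ case is not special: in the $L'$ formula the coefficient $\delta_{b,r-1}+\delta_{b,1}$ simply collapses to $2\delta_{b,1}$ and the algebra is unchanged. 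Your alternative decomposition via iterated $\ls$, $\lb$, and $R$-moves is therefore unnecessary here.
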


\begin{proof}
Let $\beta^{(s)}(\nu,J)=(\nu',J')$, and $\tr(\nu,J)=(\tilde{\nu},\tilde{J}),
\tr(\nu',J')=(\tilde{\nu}',\tilde{J}')$. In view of Lemma \ref{lem:tr and ls},
Theorem \ref{th:KSS}(3) and the description of $\gamma$,
it is enough to show $\tilde{\nu}=\tilde{\nu}',\tilde{J}=\tilde{J}'$.
We first prove $\tilde{\nu}=\tilde{\nu}'$.

Let $m^{(a)}_j$ be defined as \eqref{nu} for $\nu$ and $m'^{(a)}_j$ for $\nu'$.
Then we have
\[
m'^{(a)}_j=m^{(a)}_j+\chi(a<r)\delta_{js}.
\]
Using a similar notation for $N_{ai}$ in \eqref{N} to $\nu$ we get
\[
N'_{ai}=N_{ai}+(\delta_{ar}-\delta_{a1})\chi(i\le s).
\]
Then by \eqref{Ntilde} one calculates
\begin{align*}
\tilde{N}'_{ia}&=-N'_{ai}+\chi((a,i)\in\la)-\sum_{b,j}L'^{(b)}_j\chi(a\le b\,\&\,i\le j)\\
&=-N_{ai}-(\delta_{ar}-\delta_{a1})\chi(i\le s)+\chi((a,i)\in\la)\\
&\qquad-\sum_{b,j}(L^{(b)}_j+(\delta_{b,r-1}+\delta_{b1}-\delta_{br})\delta_{js})
\chi(a\le b\,\&\,i\le j)\\
&=\tilde{N}_{ai},
\end{align*}
and hence we obtain $\tilde{\nu}=\tilde{\nu}'$.

To prove $\tilde{J}=\tilde{J}'$ note that $J^{(a,i)}=J'^{(a,i)}$ except when
$a<r$ and $i=s$, in which case $J'^{(a,i)}$ has an extra singular row to $J^{(a,i)}$.
Recalling $p^{(a)}_i=p'^{(a)}_i$ we obtain $\tilde{J}=\tilde{J}'$.
\end{proof}

\begin{proposition} \label{prop:R}
Let $b\in B^{r,s}$ and $1\le m<s$. Then we have
\[
R(\ls^{(m)}(b))=\ls^{(s-m)}(b).
\]
\end{proposition}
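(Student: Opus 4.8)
The plan is to invoke the characterization of the combinatorial $R$-matrix recalled in \S\ref{subsec:path}: if $R(x\ot y)=\tilde y\ot\tilde x$ then $x\to row(y)=\tilde y\to row(\tilde x)$, and this identity, together with the prescribed shapes of $\tilde y$ and $\tilde x$, determines $\tilde y\ot\tilde x$ uniquely. Realize $b$ as a semistandard tableau of shape $(s^r)$. By definition $\ls^{(m)}(b)=c'\ot c''$, where $c'\in B^{r,m}$ consists of columns $1,\dots,m$ of $b$ and $c''\in B^{r,s-m}$ of columns $m+1,\dots,s$; similarly $\ls^{(s-m)}(b)=d'\ot d''$ with $d'\in B^{r,s-m}$ the first $s-m$ columns and $d''\in B^{r,m}$ the last $m$ columns of $b$. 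Since $d'$ and $d''$ already lie in the crystals $B^{r,s-m}$ and $B^{r,m}$ in which the $R$-image of $c'\ot c''$ is constrained to lie, it suffices to prove the single identity $c'\to row(c'')=d'\to row(d'')$; the uniqueness clause above then forces $R(c'\ot c'')=d'\ot d''$, which is exactly $R(\ls^{(m)}(b))=\ls^{(s-m)}(b)$.

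Next I would show that both sides of that identity equal $b$ itself, by way of the following classical fact about the plactic monoid: if a semistandard tableau $T$ of straight shape is cut into its first $k$ columns $p$ and its remaining columns $q$, then $p\to row(q)=T$. Indeed $P(row(p))=p$, so $p\to row(q)=P(row(p)\cdot row(q))$; moreover $row(p)\cdot row(q)$ is Knuth-equivalent to $col(p)\cdot col(q)$ (Knuth equivalence being a congruence and $row(U)\equiv col(U)$ for any tableau $U$), and $col(p)\cdot col(q)=col(T)$ literally, while $col(T)\equiv row(T)$; hence $P(row(p)\cdot row(q))=P(row(T))=T$. Applying this with $T=b$ and $k=m$ gives $c'\to row(c'')=b$, and with $T=b$ and $k=s-m$ gives $d'\to row(d'')=b$, so the two sides coincide and the proposition follows.

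The only point that requires care is the bookkeeping with the reading-word conventions in the classical fact above (rows read bottom-to-top, each column read bottom-to-top), which is entirely standard; see e.g.\ Fulton's \emph{Young Tableaux} or the plactic monoid of Lascoux and Sch\"utzenberger. Granting that, there is no further work: the $R$-matrix half of the argument is immediate from its characterization, so Proposition~\ref{prop:R} is in effect a reformulation of the statement that row-inserting the right column-block of a tableau into its left column-block recovers the tableau. I do not anticipate any genuine obstacle beyond checking that the shapes match the definition of $\ls^{(m)}$ and that the empty tensor factor $B'$ causes no trouble.
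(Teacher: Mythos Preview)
Your argument is correct and is precisely what the paper's one-line proof (``clear from the combinatorial description of $R$ in \S\ref{subsec:path}'') is pointing at: both $\ls^{(m)}(b)$ and $\ls^{(s-m)}(b)$ have the same insertion tableau $b$, so the characterization of $R$ forces them to correspond. You have simply supplied the details of the plactic computation that the paper leaves implicit.
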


\begin{proof}
It is clear from the combinatorial description of $R$ in \S\ref{subsec:path}.
\end{proof}

\begin{proposition} \label{prop:ls(m)}
Suppose the leftmost factor of $B$ is $B^{r,s}$ with $s\ge2$. For any $m$
$(1\le m<s)$ the following diagram commutes.
\[
\diagram{\Path(B)}{\RC(L(B))}{\Path(\ls^{(m)}(B))}
{\RC(L((\ls^{(m)}(B)))}{\Phi}{\ls^{(m)}}{\gamma^{(m)}}{\Phi}
\]
\end{proposition}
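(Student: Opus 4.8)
The plan is to connect both $b$ and $\ls^{(m)}(b)$, through paths and by moves whose images under $\Phi$ leave the underlying pair $(\nu,J)$ of partitions and riggings unchanged, to one and the same path in which the leftmost tensor factor of $B$ has been split into its individual columns. Since $\gamma^{(m)}$ also leaves $(\nu,J)$ untouched (it only alters the multiplicity array), this will identify $\Phi\circ\ls^{(m)}$ with $\gamma^{(m)}\circ\Phi$. Write $B=B^{r,s}\ot B'$, $b=c\ot b'$, and let $c_{[i..j]}$ denote the tableau formed by columns $i$ through $j$ of $c$, so that $\ls^{(m)}(b)=c_{[1..m]}\ot c_{[m+1..s]}\ot b'$. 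The only reductions I use are: a move of type (i), applying $\ls=\ls^{(1)}$ to the leftmost factor when its width is at least $2$; and a move of type (ii), applying a combinatorial $R$-matrix $R_i$ to two adjacent factors. By Theorem \ref{th:KSS}(3) a type (i) move corresponds under $\Phi$ to $\gamma=\gamma^{(1)}$ and by Theorem \ref{th:KSS} a type (ii) move corresponds to $\id$; neither changes $(\nu,J)$.

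The mechanical ingredient will be the following reading of Proposition \ref{prop:R}: if the leftmost factor is a width-$w$ block $c_{[p+1..p+w]}$ with $w\ge2$, then applying $\ls$ and then $R$ to the first two factors replaces it by $c_{[p+1..p+w-1]}\ot c_{[p+w]}$, since $R(\ls^{(1)}(c_{[p+1..p+w]}))=\ls^{(w-1)}(c_{[p+1..p+w]})$; likewise a single $R$ applied to an adjacent pair $c_{[p]}\ot c_{[p+1..p+w]}$ ($w\ge1$) turns it into $c_{[p..p+w-1]}\ot c_{[p+w]}$. Thus ``$\ls$ then $R$'', iterated, dismantles a wide leftmost factor into single columns, while a plain $R$ transports a wide factor one step to the left past a single column (shifting its content accordingly). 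First I would apply this to $b=c\ot b'$, dismantling $c$ and arriving at $c_{[1]}\ot c_{[2]}\ot\cdots\ot c_{[s]}\ot b'$. Then I would apply it to $\ls^{(m)}(b)=c_{[1..m]}\ot c_{[m+1..s]}\ot b'$: dismantle $c_{[1..m]}$ to reach $c_{[1]}\ot\cdots\ot c_{[m]}\ot c_{[m+1..s]}\ot b'$; transport $c_{[m+1..s]}$ to the front, so that it becomes successively $c_{[m..s-1]},\ c_{[m-1..s-2]},\ \ldots,\ c_{[1..s-m]}$ while leaving $c_{[s]},c_{[s-1]},\ldots,c_{[s-m+1]}$ behind it; then dismantle $c_{[1..s-m]}$. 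The endpoint is again $c_{[1]}\ot c_{[2]}\ot\cdots\ot c_{[s]}\ot b'$.

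With both $b$ and $\ls^{(m)}(b)$ connected to this common split path by type (i) and (ii) moves, their $\Phi$-images coincide as rigged configurations, which is exactly $\Phi(\ls^{(m)}(b))=\gamma^{(m)}(\Phi(b))$. The step I expect to need the most care is the orchestration of the moves --- $\ls$ is only legal on the leftmost factor, $R$ only on adjacent factors --- so that the two reduction routes provably end at the \emph{same} split path; concretely, one must verify that in the reduction of $\ls^{(m)}(b)$ the interior block $c_{[m+1..s]}$ migrates exactly as stated, and that every intermediate tensor element is again a path so that Theorem \ref{th:KSS} is applicable at each step. All of this should be routine bookkeeping once the $\ls$-plus-$R$ column-peeling move is isolated; I do not expect any genuinely new difficulty.
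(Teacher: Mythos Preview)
Your argument is correct and rests on the same ingredients as the paper's proof: only $\ls=\ls^{(1)}$ and combinatorial $R$-moves, each of which leaves the pair $(\nu,J)$ unchanged under $\Phi$ by Theorem~\ref{th:KSS}, together with Proposition~\ref{prop:R} to track how the column blocks of $c$ recombine under $R$. The organization differs, however. The paper proceeds by induction on $m$, writing $\ls^{(m)}$ as the composite
\[
\ls^{-1}\circ(1\ot R\ot1)\circ\ls\circ(R\ot1)\circ\ls^{(m-1)}
\]
on $B^{r,s}\ot B'$; the inductive step thus involves an inverse $\ls^{-1}$, well defined because the preceding moves land in the image of $\ls$. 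Your approach instead reduces both $b$ and $\ls^{(m)}(b)$ to the common fully column-split path $c_{[1]}\ot\cdots\ot c_{[s]}\ot b'$, avoiding both induction and $\ls^{-1}$ at the cost of a longer chain of moves. The trade-off is mostly stylistic: the paper's route is shorter but uses an inverse map whose well-definedness needs a word of justification, while yours is entirely ``forward'' but requires the transport-and-dismantle bookkeeping you flag at the end (which is indeed routine, exactly as you anticipate).
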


\begin{proof}
We prove by induction on $m$. The $m=1$ case is nothing but Theorem \ref{th:KSS}
(3). Consider the following sequence of maps.
\begin{align*}
&B^{r,s}\ot B'\overset{\ls^{(m-1)}}{\longrightarrow}B^{r,m-1}\ot B^{r,s-m+1}\ot B'
\overset{R\ot1}{\longrightarrow}B^{r,s-m+1}\ot B^{r,m-1}\ot B' \\
&\overset{\ls}{\longrightarrow}B^{r,1}\ot B^{r,s-m}\ot B^{r,m-1}\ot B'
\overset{1\ot R\ot1}{\longrightarrow}B^{r,1}\ot B^{r,m-1}\ot B^{r,s-m}\ot B' \\
&\overset{\ls^{-1}}{\longrightarrow}B^{r,m}\ot B^{r,s-m}\ot B'
\end{align*}
At each step the maps cut, move the cutting line, or concatenate the tableau
belonging to $B^{r,s}$, by the definition of $\ls^{(k)}$ and Proposition \ref{prop:R}.
In particular, the last map $\ls^{-1}$ is well defined. It is clear that the composition
of these maps coincides with $\ls^{(m)}$. Since all maps in the sequence correspond
to the identity in the rigged configuration side by the induction hypothesis and 
Theorem \ref{th:KSS}, the proof is finished.
\end{proof}

\noindent{\it Proof of Theorem \ref{th:main}.}
First recall Lemma 5.3 given in \cite{KSS}. Consider the following diagram.
\[
\xymatrix{
\bullet \ar[rrr] \ar[dr] \ar[ddd] & & & \bullet  \ar[dl] \ar[ddd] \\
& \bullet  \ar[r] \ar[d] & \bullet  \ar[d] & \\
& \bullet  \ar[r] & \bullet  & \\
\bullet  \ar[ur] \ar[rrr] & & & \bullet  \ar[ul]_j
}
\]
Viewing this diagram as a cube with front face given by the large square, suppose
the square diagrams given by all faces of the cube except the front commute. 
Assume also that the map $j$ is injective. Then the front face should also commute.

For $B=B^{r_k,s_k}\ot B^{r_{k-1},s_{k-1}}\ot\cdots\ot B^{r_1,s_1}$ we introduce 
$(\sum_{j=1}^kr_js_j,\sum_{j=1}^k(r_j-1)s_j,\sum_{j=1}^k(s_j-1))$. Note that the operation
for $B$ in (i) ($i=1,2,3$) of Theorem \ref{th:KSS} decreases its $i$-th component
by $1$. We prove by induction on the lexicographic order of this index.

First suppose that the leftmost factor of $B$ is $B^{1,1}$ and 
consider the following diagram.
\[
\xymatrix{
\Path(B) \ar[rrr]^\Phi \ar[dr]_\lh \ar[ddd]_{S_m} & & & 
\RC(L(B)) \ar[dl]^{\delta} \ar[ddd]^{S_m} \\
& \Path(\lh(B)) \ar[r]^\Phi \ar[d]_{S_m} & \RC(L(\lh(B))) \ar[d]^{S_m} & \\
& \Path(S_m(\lh(B))) \ar[r]_\Phi & \RC(L(S_m(\lh(B)))) & \\
\Path(S_m(B)) \ar[ur]^{\lh^{(m)}} \ar[rrr]_\Phi & & & \Path(L(S_m(B))) \ar[ul]_{\delta^{(m)}}
}
\]
Here we have set $\lh^{(m)}=\lh\circ(\lh\circ\ls)^{m-1},
\delta^{(m)}=\delta\circ(\delta\circ\gamma)^{m-1}$.
We wish to show the front face commutes. By the above lemma and the injectivity
of $\delta^{(m)}$, it is enough to show all the other faces commute. The back face
is assumed to commute by induction. The commutativity of the left face is clear,
while the right face is due to Lemma \ref{lem:m boxes}. The top and bottom faces
commute by Theorem \ref{th:KSS}.

Next suppose that the leftmost factor of $B$ is $B^{r,1}$($r\ge2$).
Consider
\[
\xymatrix{
\Path(B) \ar[rrr]^\Phi \ar[dr]_\lb \ar[ddd]_{S_m} & & & 
\RC(L(B)) \ar[dl]^{\beta} \ar[ddd]^{S_m} \\
& \Path(\lb(B)) \ar[r]^\Phi \ar[d]_{S_m} & \RC(L(\lb(B))) \ar[d]^{S_m} & \\
& \Path(S_m(\lb(B))) \ar[r]_\Phi & \RC(L(S_m(\lb(B)))) & \\
\Path(S_m(B)) \ar[ur]^{\lb^{(m)}} \ar[rrr]_\Phi & & & \Path(L(S_m(B))) \ar[ul]_{\beta^{(m)}}
}
\]
Again, we show all the other faces other than the front one commute. The back face
commutes by induction. The commutativity of the left and right faces is clear.
The top face commutes by Theorem \ref{th:KSS}, while the bottom face is due to 
Proposition \ref{prop:lb(s)}.

Finally suppose that the leftmost factor of $B$ is $B^{r,s}$($s\ge2$).
Consider
\[
\xymatrix{
\Path(B) \ar[rrr]^\Phi \ar[dr]_\ls \ar[ddd]_{S_m} & & & 
\RC(L(B)) \ar[dl]^{\gamma} \ar[ddd]^{S_m} \\
& \Path(\ls(B)) \ar[r]^\Phi \ar[d]_{S_m} & \RC(L(\ls(B))) \ar[d]^{S_m} & \\
& \Path(S_m(\ls(B))) \ar[r]_\Phi & \RC(L(S_m(\ls(B)))) & \\
\Path(S_m(B)) \ar[ur]^{\ls^{(m)}} \ar[rrr]_\Phi & & & \Path(L(S_m(B))) \ar[ul]_{\gamma^{(m)}}
}
\]
In this case we use Proposition \ref{prop:ls(m)} for the commutativity of the 
bottom face.
\qed

\section{Application}

Theorem \ref{th:main} motivates us to extend the notions of $\Path(B)$ and $\RC(L)$. 
In the sequel we assume $R$ to be one of $\Z,\Q$ or $\R$. We set 
$R_{>0}=\{z\in R\mid z>0\},R_{\ge0}=\{z\in R\mid z\ge0\}$. 
First we recall an alternative description of $B^{r,s}$.
For a tableau in $B^{r,s}$ let $x_{i,j}$ ($1\le i\le r,i\le j\le n-r+i+1$) be the number
of boxes with letter $j$ in the $i$-th row. Then they must satisfy
\begin{align}
&\sum_{k=i}^{n-r+i+1}x_{i,k}=s\quad(i=1,2,\ldots,r), \label{sum of x}\\
&\sum_{k=i}^jx_{i,k}\ge\sum_{k=i+1}^{j+1}x_{i+1,k}\quad
(i=1,\ldots,r-1;j=i,\ldots,n-r+i). \label{x ineq}
\end{align}
Now for $r\in I_0,s\in R_{>0}$ define 
\[
\calB^{r,s}=\{(x_{i,j})\mid x_{i,j}\in R_{\ge0}\text{ and satisfies \eqref{sum of x} and
\eqref{x ineq}}\}.
\]
We can also consider a formal tensor product
\[
\calB=\calB^{r_k,s_k}\ot\cdots\ot \calB^{r_1,s_1}.
\]
Recall that $\Path(B)$ was defined as the set of elements $b$ satisfying
$e_ib=0$ for $i\in I_0$ in \S2.2. Note that $e_ib=0$ is equivalent to $\veps_i(b)=0$.
Hence our new notion $\Path_R(\calB)$ should be defined as 
the set of elements $b\in\calB$ satisfying 
$\veps_i(b)=0$ for $i\in I_0$ where $\veps_i(b)$ for $b\in \calB^{r,s}$ is given 
in \cite[\S5.2]{OSS} and $\veps_i(b)$ for a multiple tensor product $b\in \calB$
is calculated by extending \eqref{eps tensor} via coassociativity.
For $m\in R_{>0}$ the similarity map $S_m$ on $\Path_R(\calB)$ is defined as follows.
For $b=b_k\ot\cdots\ot b_1,b_l=(x_{i,j}^{(l)})\in \calB^{r_l,s_l}$, $S_m(b)=b'_k\ot
\cdots\ot b'_1$ is given by $b'_l=(mx_{i,j}^{(l)})$ for $1\le l\le k$. Then $S_m$ turns
out a map from $\Path_R(\calB)$ to $\Path_R(\calB')$ where $\calB'=
\calB^{r_k,ms_k}\ot\cdots\ot\calB^{r_1,ms_1}$. It is easy to see that 
$S_m\circ S_{m'}=S_{mm'}$.

The set of rigged configurations $\RC(L)$ defined in \S2.3 can also be extended by 
allowing the indices $i$ to take values in $R_{>0}$. Consider $L=(L_i^{(a)})_{a\in I_0,i\in R_{>0}}$ such that
$L_i^{(a)}\in\Z_{\ge0}$ and $L_i^{(a)}>0$ only for finitely many $(a,i)$. Let $\nu=(m_i^{(a)})_{a\in I_0,i\in R_{>0}}$
be similar such data. $\nu$ is said to be an admissible configuration if
it satisfies
\begin{equation*}
  p_i^{(a)} \ge 0\qquad\text{for any $a\in I_0$ and
  $i\in R_{>0}$,}
\end{equation*}
where
\begin{equation*}
p_i^{(a)} = \sum_{j\in R_{>0}} \left( L_j^{(a)} \min(i,j) -
\sum_{b\in I_0}C_{ab}\min(i,j)m_j^{(b)}\right).
\end{equation*}
We identify an admissible configuration $\nu$ with a sequence $(\nu^{(a)})_{a\in I_0}$ such that each 
$\nu^{(a)}$ is a weakly decreasing sequence of elements of $R_{>0}$ in which every $i\in R_{>0}$ appears
$m^{(a)}_i$ times. $L^{(a)}=(L_i^{(a)})_{i\in R_{>0}}$ and $\nu^{(a)}$ can be viewed as 
a generalized Young diagram in which each row can have 
length in the set $R_{>0}$. A rigging $J$ on $\nu$ is to associate, with each part of such generalized Young diagram
$\nu^{(a)}$ of the same width $i$, a sequence $(J^{(a,i)}_1\ge J^{(a,i)}_2\ge\ldots\ge J^{(a,i)}_{m^{(a)}_i})$
of length $m^{(a)}_i$ such that $J^{(a,i)}_j\in R_{\ge0}$ for $j=1,\ldots,m_i^{(a)}$ and $p^{(a)}_i\ge J^{(a,i)}_1$. 
We call such pair $(\nu,J)$ a rigged configuration and denote the set of rigged 
configurations by $\RC_R(L)$. For $m\in R_{>0}$ the similarity map $S_m$ on 
$\RC_R(L)$ is defined by enlarging all rows of $L^{(a)}$ and $\nu^{(a)}$ $m$ times
and lengthening all rows $m$ times and multiplying $m$ to all riggings. 
It is again easy to see that $S_m\circ S_{m'}=S_{mm'}$.

For $\calB=\calB^{r_k,s_k}\ot\cdots\ot \calB^{r_1,s_1}$ we define 
$L(\calB)=(L^{(a)}_i)_{a\in I_0,i\in R_{>0}}$ by \eqref{def L}. From Theorem \ref{th:KSS}
there is a bijection $\Phi_\Z=\Phi$ from $\Path_\Z(\calB)$ to $\RC_\Z(L(\calB))$.
We now define a map $\Phi_\Q$ from $\Path_\Q(\calB)$ to $\RC_\Q(L(\calB))$.
Let $b=b_k\ot\cdots\ot b_1,b_l=(x^{(l)}_{i,j})\in\calB^{r_l,s_l}$ be an element of
$\Path_\Q(\calB)$. Take the minimal positive integer $m_0$ such that 
$m_0x_{ij}^{(l)}\in\Z$ for all $i,j,l$. We define $\Phi_\Q(b)=
(S_{1/m_0}\circ\Phi\circ S_{m_0})(b)$.
\begin{proposition} \label{prop:Q}
This $\Phi_\Q$ is a bijection from $\Path_\Q(\calB)$ to $\RC_\Q(L(\calB))$.
\end{proposition}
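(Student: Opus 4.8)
The plan is to show that $\Phi_\Q$ is well-defined (independent of any choices made in its construction, beyond the canonical $m_0$), and then that it is a bijection by exhibiting an explicit two-sided inverse. The construction $\Phi_\Q(b)=(S_{1/m_0}\circ\Phi\circ S_{m_0})(b)$ already fixes $m_0$ as the minimal common denominator, so the only thing to verify for well-definedness is that the output lands in $\RC_\Q(L(\calB))$ rather than merely in $\RC_\Z$ rescaled in some uncontrolled way; this is immediate once we check $S_{m_0}(b)\in\Path_\Z(\calB')$ (clear from the definition of $S_{m_0}$ on $\calB$, since all $m_0x_{ij}^{(l)}$ are integers and the inequalities \eqref{sum of x}, \eqref{x ineq} are preserved under scaling) and that $S_{1/m_0}$ makes sense on $\Phi(S_{m_0}(b))$, i.e. that all parts and riggings of that rigged configuration are divisible by $m_0$.

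The key point is this last divisibility, and here is where Theorem \ref{th:main} does the work. First I would observe that $S_{m_0}(b)$ lies in the image of the similarity map $S_{m_0}:\Path_\Z(\calB)\to\Path_\Z(\calB')$ essentially by construction — indeed $S_{m_0}(b)$ applied to the integral path $b$ (after clearing denominators we may regard $b$ as already scaled) — more precisely, write $b = S_{1/m_0}(c)$ for the integral path $c := S_{m_0}(b)\in\Path_\Z(\calB')$, so that $c$ is manifestly in the image of $S_{m_0}$ acting on... no: the cleaner statement is that $c=S_{m_0}(b)$ is itself in the image of $S_{m_0}$ only if $b$ is integral. So instead I would argue directly: by Theorem \ref{th:main}, $\Phi(S_{m_0}(b')) = S_{m_0}(\Phi(b'))$ for any \emph{integral} path $b'$; but $b$ need not be integral. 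The correct route is to note that the minimality of $m_0$ guarantees $b$ is not in the image of any $S_{m_0/d}$ for $d>1$, which is irrelevant; what we actually need is simply that $\Phi$ maps $\Path_\Z(\calB')$ into $\RC_\Z(L(\calB'))$, and that the sub-polytope-point $S_{m_0}(b)$, while integral, is a \emph{general} integral point, so $\Phi(S_{m_0}(b))$ is a general integral rigged configuration with no a priori divisibility. Hence the divisibility by $m_0$ of $\Phi(S_{m_0}(b))$ must be \emph{proved}, and this is the main obstacle.

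To overcome it, I would use Theorem \ref{th:main} in the following way. Although $b$ is only rational, consider $S_{m_0}(b)\in\Path_\Z(\calB')$ and note that since $b\in\calB^{r,s}$-type factors have entries with common denominator exactly $m_0$, the integral path $c:=S_{m_0}(b)$ satisfies: $c$ lies in the image of $S_{m_0}:\Path_\Z(\calB)\to\Path_\Z(\calB')$ precisely because $c = S_{m_0}(b)$ and $b$ — rescaled to integer entries via multiplication by $m_0$ — no. The honest argument: take $b^\sharp$ to be $b$ with every entry multiplied by $m_0$, which is an \emph{integral} element of $\Path_\Z(\calB)$ (not $\calB'$); then $S_{m_0}(b^\sharp)$ has entries $m_0^2 x_{ij}^{(l)}$, which is $S_{m_0}$ of an integral path, so by Theorem \ref{th:main}, $\Phi(S_{m_0}(b^\sharp)) = S_{m_0}(\Phi(b^\sharp))$ has all parts and riggings divisible by $m_0$. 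But $S_{m_0}(b^\sharp)$ and $S_{m_0}(b)$ differ: the first is in $\calB^{r_l, m_0^2 s_l}$, the second in $\calB^{r_l, m_0 s_l}$. These are genuinely different crystals with a different $L$. So I would instead argue that $S_{m_0}(b)$, as an integral path in $\calB'=\calB^{r_l,m_0 s_l}$, is in the image of $S_{m_0}:\Path_\Z(\calB^{r_l,s_l}\text{-version})\to\Path_\Z(\calB')$ — and it is, with preimage $b$ regarded as a point of the rescaled polytope — wait, $b$ has non-integer entries. The resolution is that $S_{m_0}$ on $\calB$-polytopes does not require the source to be integral: $S_{m_0}$ is defined on all of $\Path_\Q(\calB)$, and $S_{m_0}(b)$ being integral does \emph{not} require $b$ integral. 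So $c = S_{m_0}(b)$ is an integral path which is the $S_{m_0}$-image of the rational path $b$, hence — and this is the content we extract — $c$ lies in $S_{m_0}(\Path_\Q(\calB))\cap\Path_\Z(\calB')$. The divisibility of $\Phi(c)$ then follows from a \emph{uniqueness/naturality} argument: by Theorem \ref{th:KSS}, $\Phi$ on $\calB'$ is characterized by commuting with $\lh,\lb,\ls$, and one checks by induction on the statistic $(\sum r_j s_j, \sum(r_j-1)s_j,\sum(s_j-1))$ — exactly as in the proof of Theorem \ref{th:main} — that for any $c$ in the image of $S_{m_0}$ (even from a rational source), $\Phi(c)$ has all data divisible by $m_0$, because $S_{m_0}$-images are stable under $\lh^{(m_0)},\lb^{(m_0)},\ls^{(m_0)}$ and under $\delta^{(m_0)}$-type descents removing $m_0$ boxes at a time (Lemma \ref{lem:m boxes}), matching the inductive skeleton already in place.

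Once well-definedness is settled, bijectivity is routine: I would define $\Psi_\Q: \RC_\Q(L(\calB))\to\Path_\Q(\calB)$ by $\Psi_\Q(\nu,J) = (S_{1/n_0}\circ\Phi^{-1}\circ S_{n_0})(\nu,J)$ where $n_0$ is the minimal positive integer clearing all parts and riggings, and check $\Psi_\Q\circ\Phi_\Q=\id$, $\Phi_\Q\circ\Psi_\Q=\id$. For the first composite: starting from $b$ with denominator $m_0$, $\Phi_\Q(b)$ has parts/riggings that are $\tfrac{1}{m_0}$ times integers — and in fact one must check its minimal clearing integer $n_0$ equals $m_0$, which follows because $\Phi$ is a bijection preserving the weight $\la=\sum(Q_\infty(L^{(a)})\varpi_a - Q_\infty(\nu^{(a)})\alpha_a)$ and hence cannot decrease the denominator; then $S_{n_0}\circ S_{1/m_0}=\id$ and $\Phi^{-1}\circ\Phi=\id$ give the claim. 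The reverse composite is symmetric. The only subtlety — that $n_0 = m_0$ exactly, not a proper divisor — reduces to the statement that $\Phi$ and $\Phi^{-1}$ preserve minimal denominators, which in turn follows from the already-established fact that $\Phi$ intertwines $S_m$ with $S_m$ on integral objects together with injectivity of $S_m$. I expect the well-definedness divisibility claim (the third paragraph above) to be the main obstacle; everything else is bookkeeping with the similarity relation $S_m\circ S_{m'}=S_{mm'}$ and the bijectivity of $\Phi$ over $\Z$.
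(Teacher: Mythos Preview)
Your proposal is built around a non-issue. You spend three paragraphs trying to prove that $\Phi(S_{m_0}(b))$ has all parts and riggings divisible by $m_0$, so that ``$S_{1/m_0}$ makes sense''. But $S_{1/m_0}$ is a map into $\RC_\Q(L(\calB))$, and by definition $\RC_\Q$ allows row lengths in $\Q_{>0}$ and riggings in $\Q_{\ge0}$. So $S_{1/m_0}$ applied to any element of $\RC_\Z(L(\calB'))$ automatically lands in $\RC_\Q(L(\calB))$; no divisibility is required. Your ``main obstacle'' simply does not exist, and the inductive argument you sketch for it (mirroring the proof of Theorem~\ref{th:main}) is unnecessary.

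The paper's proof addresses a different, genuine point. Since $m_0$ is fixed as the minimal denominator, $\Phi_\Q$ is already well-defined as written; the substance is to show that replacing $m_0$ by any multiple $m$ yields the same output, i.e.\ $S_{1/m}\circ\Phi\circ S_m = S_{1/m_0}\circ\Phi\circ S_{m_0}$ on $\Path_\Q(\calB)$. This is exactly the commutative diagram coming from Theorem~\ref{th:main} (applied with the integer $m/m_0$) together with $S_{m'}\circ S_{m''}=S_{m'm''}$. Once this is in hand, bijectivity follows by defining $\Psi_\Q$ analogously via the minimal clearing integer $n_0$ on the rigged-configuration side: to verify $\Psi_\Q\circ\Phi_\Q=\id$ one passes to a common multiple of $m_0$ and $n_0$ and uses the independence just proved on both sides, so that the middle $S$'s cancel. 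You never actually need $n_0=m_0$; your attempted justification of that equality via weight preservation is neither correct nor required.
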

\begin{proof}
For well-definedness we need to show that if $m$ is a multiple of $m_0$, the image 
of the above map $\Phi_\Q$ stays the same with $m_0$ replaced with $m$. But it is 
true since all the small diagrams 
below are commutative. (The middle one is from Theorem \ref{th:main}.)
\[\xymatrix{
\Path_\Q(\calB) \ar@{->}[r]^{S_{m_0}} \ar@{->}[rd]_{S_m} & 
\Path_\Z(\calB') \ar@{->}[r]^{\Phi_\Z} \ar@{->}[d]^{S_{m/m_0}} & 
\RC_\Z(L(\calB')) \ar@{->}[d]^{S_{m/m_0}} \ar@{->}[rd]^{S_{1/m_0}} \\
& \Path_\Z(\calB'') \ar@{->}[r]_{\Phi_\Z} & \RC_\Z(L(\calB'')) \ar@{->}[r]_{S_{1/m}} & 
\RC_\Q(L(\calB))
}\]
where $\calB'=\calB^{r_k,m_0s_k}\ot\cdots\ot\calB^{r_1,m_0s_1},
\calB''=\calB^{r_k,ms_k}\ot\cdots\ot\calB^{r_1,ms_1}$.
\end{proof}

Similarly, we can try to define a map $\Phi_\R$ from $\Path_\R(\calB)$ to 
$\RC_\R(L(\calB))$ by using the similarity map $S_m$ for $m\in\R$ and considering
a sequence of elements in $\Path_\Q(\calB')$ convergent to the element in
$\Path_\R(\calB)$. (The second index of each single KR crystal in $\calB'$ is
slightly shifted from $\calB$.) We conjecture that this map is well-defined, but its 
proof seems nontrivial. We also conjecture that $\Phi_\R$ is a homeomorphism.

\begin{remark}
An evidence of this conjecture is given in \cite{KSY}, where a piecewise linear formula
of the inverse map of $\Phi_\R$ is obtained when $r_j=1$ for any $j$.
\end{remark}

\begin{remark}
A similar map to our $\Phi_\R$ in the case of $A_1$ has been constructed in \cite{T}
to linearize a certain integrable system called the tropical periodic Toda lattice. It 
would be interesting to establish an explicit connection between them.
\end{remark}

\subsection*{Acknowledgements}
The author thanks Atsuo Kuniba, Anne Schilling, Taichiro Takagi and Yasuhiko Yamada
for useful discussion or comments. He is partially supported by the Grants-in-Aid 
for Scientific Research No. 23340007 from JSPS.

\end{document}